\newtheorem{theorem}{Theorem}
\newtheorem{example}[theorem]{Example}
\newtheorem{problem}[theorem]{Problem}
\newtheorem{lemma}[theorem]{Lemma}
\newtheorem{remark}[theorem]{Remark}
\newcommand{\F}{\mathcal F}
\newcommand{\G}{\mathcal G}
\newcommand{\K}{\mathbb K}
\newcommand{\N}{\mathbb N}
\newcommand{\R}{\mathbb R}
\newcommand{\mH}{\mathcal{H}}
\newcommand{\on}{\operatorname}
\subjclass[2010]{}
\keywords{IFSs, generalized IFSs, attractors, Cantor sets}
\author{Patrycja Jaros}
\address{Division of Dynamics, \L\'od\'z University of Technology, 
\L\'od\'z, Poland}
\email {patrycja.kuzma@p.lodz.pl}
\author{\L ukasz Ma\'slanka}
\address{Institute of Mathematics, \L\'od\'z University of Technology, 
\L\'od\'z, Poland}
\email {maslana0189@gmail.com}
\author{Filip Strobin}
\address{Institute of Mathematics, \L\'od\'z University of Technology, W\'olcza\'nska 215, 93-005
\L\'od\'z, Poland}
\email {filip.strobin@p.lodz.pl}
\title[]{algorithms generating images of attractors of generalized iterated function systems}
\subjclass[2010]{Primary: 28A80 ; Secondary:37C25, 37C70 } 
\keywords{fractal, iterated function system, generalized iterated function system, chaos game, deterministic algorithm, affine mappings}
\date{}
\begin{document}
\maketitle
\begin{abstract}
The paper is devoted to searching algorithms which will allow to generate images of attractors of \emph{generalized iterated function systems} (GIFS in short), which are certain generalization of classical iterated function systems, defined by Mihail and Miculescu in 2008, and then intensively investigated in the last years (the idea is that instead of selfmaps of a metric space $X$, we consider mappings form the Cartesian product $X\times...\times X$ to $X$).\\ 
Two presented algorithms are counterparts of classical \emph{deterministic algorithm} and so-called \emph{chaos game}. The third and fourth one one is fitted to special kind of GIFSs - to \emph{affine} GIFS, which are, in turn, also investigated. 
\end{abstract}

\section{Introduction}
If $X$ is a metric space and $\F=\{f_1,...,f_n\}$ is a finite family of continuous selfmaps of $X$, then by the same letter $\F$ we will also denote the function $\F:\K(X)\to\K(X)$ defined by 
$$
\F(K):=f_1(K)\cup...\cup f_n(K),
$$
where $\K(X)$ denotes the family of all nonempty and compact subsets of $X$ (and is considered as a metric space with the classical Hausdorff-Pompeiu metric, which will be denoted by the letter $H$).\\
The following Hutchinson-Barnsley theorem (first proved by Hutchinson \cite{H}, then popularized by Barnsley \cite{B}), is one of the milestones for the fractal theory.
\begin{theorem}\label{f1}
Assume that $(X,d)$ is a complete metric space and $\F=\{f_1,...,f_n\}$ is a finite family of Banach contractions of $X$ (i.e., selfmaps of $X$ with Lipschitz constants less then $1$). Then there is a unique set $A_\F\in \K(X)$ such that $$A_\F=\F(A_\F)=f_1(A_\F)\cup...\cup f_n(A_\F)$$
Moreover, for every $K\in\K(X)$, the sequence of iterations $\F^{(k)}(K)$ converges to $A_\F$ with respect to the Hausdorff-Pompeiu metric.
\end{theorem}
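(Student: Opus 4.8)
The plan is to deduce the statement from the classical Banach Contraction Principle applied not to $X$ itself but to the hyperspace $(\K(X),H)$. For this I need three ingredients: that $\F$ genuinely maps $\K(X)$ into $\K(X)$, that $(\K(X),H)$ is a complete metric space, and that $\F:\K(X)\to\K(X)$ is a Banach contraction. The first is immediate: each $f_i$ is continuous, so $f_i(K)$ is nonempty and compact whenever $K$ is, and a finite union of nonempty compact sets is again nonempty and compact.

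The completeness of $(\K(X),H)$ is the technical heart of the argument. Starting from an $H$-Cauchy sequence $(K_m)$ in $\K(X)$, I would define the candidate limit as
$$A:=\{x\in X:\ \exists\, x_m\in K_m \text{ with } x_m\to x\},$$
that is, the set of all subsequential limits of selections choosing one point from each $K_m$. The work is to show that (i) $A$ is nonempty, (ii) $A$ is closed and totally bounded, hence compact, using completeness of $X$ together with the Cauchy property to control tails, and (iii) $H(K_m,A)\to 0$. Each of these is a standard but careful $\vp$-argument: for every $\vp$ the Cauchy condition supplies an index $N$ with $H(K_m,K_{m'})<\vp$ for $m,m'\ge N$, which both lets one build convergent selections and simultaneously bounds the two one-sided Hausdorff distances between $K_m$ and $A$.

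With completeness in hand, it remains to show that $\F$ contracts. Writing $s:=\max_i \operatorname{Lip}(f_i)<1$, I would use two elementary properties of the Hausdorff-Pompeiu metric: first, that a Lipschitz map with constant $s_i$ satisfies $H(f_i(K),f_i(L))\le s_i\,H(K,L)$; and second, that the metric behaves well under unions, namely $H(A_1\cup\dots\cup A_n,B_1\cup\dots\cup B_n)\le \max_i H(A_i,B_i)$. Combining these,
$$H(\F(K),\F(L))=H\Big(\bigcup_i f_i(K),\bigcup_i f_i(L)\Big)\le \max_i H(f_i(K),f_i(L))\le \max_i s_i\,H(K,L)=s\,H(K,L),$$
so $\F$ is a contraction with ratio $s<1$.

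Finally, the Banach Contraction Principle on the complete space $(\K(X),H)$ yields a unique fixed point $A_\F$ with $A_\F=\F(A_\F)$, and guarantees that for every $K\in\K(X)$ the iterates $\F^{(k)}(K)$ converge to $A_\F$ in $H$, which is exactly the asserted convergence. I expect the main obstacle to be the completeness of the hyperspace: verifying that the proposed set $A$ is compact and is the $H$-limit requires the most care, whereas the contraction estimate and the closing appeal to the fixed point theorem are routine once the two auxiliary metric inequalities are established.
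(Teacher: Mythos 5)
Your proposal is correct. The paper does not actually prove Theorem \ref{f1} --- it quotes it as the classical Hutchinson--Barnsley theorem with references to \cite{H} and \cite{B} --- and your argument (completeness of the hyperspace $(\K(X),H)$, the contraction estimate $H(\F(K),\F(L))\le s\,H(K,L)$ with $s=\max_i \operatorname{Lip}(f_i)<1$ obtained from the two standard Hausdorff-metric inequalities, and then the Banach fixed point theorem applied in $(\K(X),H)$) is precisely the standard proof given in those references, so there is nothing to compare beyond noting that your outline is the canonical one and that its only delicate step, the compactness of the candidate limit set $A$ and the convergence $H(K_m,A)\to 0$, is correctly identified and handled by the usual $\vp$-arguments.
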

Note that in the above theorem, instead of Banach contractions, there can be consider many weaker types of contractive mappings (later we will discuss this topic broader).\\
In the above frame, family of mappings $\F$ is called \emph{iterated function system}, and a set $A_\F$ - the \emph{fractal generated by $\F$}. Also a compact set $A$ is called \emph{a Hutchinson-Barnsley fractal}, if it is a fractal generated by some IFS.\\
It turnes out that many classical fractals, like the Cantor ternary set, the Sierpiński gasket etc., are H-B fractals. Also many objects from the nature (like trees, clouds etc.) can be modelled as H-B fractals.\\
In connection with the above statement, a natural problem is:
\begin{problem}\label{f2}
Given an IFS $\F$ on an Euclidean space $\R^d$, how to make an image of its fractal $A_\F$?
\end{problem} There are two main algorithms that can be easily adjusted to computer programs:\\
The first one, called the \emph{deterministic algorithm}, bases on the second part of Theorem \ref{f1}: we choose a compact set $K$, then we find $K_1:=\F(K)$, then $K_2:=\F(K_1)$ and so on. By Theorem \ref{f1}, sets $K_n$ are better and better approximations of the fractal $A_\F$.\\
The second one, called the \emph{chaos game algorithm}, goes in the following way: we choose randomly any point $x_0\in X$, then we choose randomly $i_1\in\{1,...,n\}$, and put $x_1:=f_{i_1}(x_0)$. Then we choose randomly $i_2\in\{1,...,n\}$, and put $x_2:=f_{i_2}(x_1)$ and so on. As a consequence, we obtain a sequence $x_0,x_1,x_2,...$, and it turns out that sets $\{x_k,...,x_N\}$, where $k<N$ are appropriately big, are good approximations of the fractal $A_\F$.\\
In this paper we are going to consider Problem \ref{f2} for the fractals generated by \emph{generalized iterated function systems} introduced by Mihail and Miculescu, then intensively investigated by them and also Strobin and Swaczyna, and Secelean - see for example \cite{M},\cite{M1},\cite{MM1},\cite{MM2},\cite{MS}, \cite{SS1} and \cite{SS2}. We now recall the notion of GIFSs.\\
Let $(X,d)$ be a metric space and $m\in\N$. By $X^m$ we denote the Cartesian product of $m$ copies of $X$. We endow $X^m$ with the maximum metric $d_m$:
$$
d_m((x_1,...,x_m),(y_1,...,y_m)):=\max\{d(x_1,y_1),...,d(x_m,y_m)\}
$$
We say that $f:X^m\to X$ is a \emph{generalized Matkowski contraction (of order $m$)}, if for some nondecreasing function $\varphi:[0,\infty)\to[0,\infty)$ such that for any $t>0$, the sequence of iterates $\varphi^{(k)}(t)\to 0$, we have that
$$
d(f(x),f(y))\leq \varphi(d_m(x,y)),\;\;\;x,y\in X^m
$$ 
Note that if $m=1$, then we get a \emph{Matkowski contraction} \cite{Ma}, and it is well known that Theorem \ref{f1} can be strengthened by considering IFSs consisting of Matkowski contractions (clearly, each Banach contraction is Matkowski, but the converse need not be true).\\
Finally, a finite family $\F=\{f_1,...,f_n\}$ of generalized Matkowski contractions of order $m$ is called a \emph{generalized iterated function system of order $m$} (GIFS in short).\\
Also by $\F$ we will denote the mapping $\F:\K(X)^m\to \K(X)$ defined by
$$
\F(K_1,...,K_m):=f_1(K_1\times...\times K_m)\cup...\cup f_n(K_1\times...\times K_m)
$$
The following result is an extension of Theorem \ref{f1} (see \cite{MM1},\cite{MM2},\cite{SS1}, \cite{SS2}): 
\begin{theorem}\label{f3}
Let $X$ be a complete metric space and $\F$ be a GIFS of order $m$. Then there exists a unique set $A_\F\in\K(X)$ such that
$$
A_\F=\F(A_\F,...,A_\F)=f_1(A_\F\times...\times A_\F)\cup...\cup f_n(A_\F\times...\times A_\F)
$$
Moreover, for every $K_0,...,K_{m-1}\in\K(X)$, the sequence $(K_k)$, defined by $K_{k+m}:=\F(K_k,...,K_{m+k-1})$, converges to $A_\F$ with respect to the Hausdorff-Pompeiu metric.
\end{theorem}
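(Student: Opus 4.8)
The plan is to deduce Theorem \ref{f3} from an abstract fixed point theorem for a \emph{single} generalized Matkowski contraction of order $m$ acting on a complete metric space, applied to the complete space $(\K(X),H)$ (recall that $\K(X)$ is complete whenever $X$ is) together with the map $\F$. First I would reduce the finitely many moduli to one: since each $f_i$ is a generalized Matkowski contraction with some nondecreasing $\varphi_i$ whose iterates tend to $0$, the function $\varphi:=\max\{\varphi_1,\dots,\varphi_n\}$ is again nondecreasing, dominates every $\varphi_i$, and (by the elementary fact that a finite maximum of such comparison functions is again one) still satisfies $\varphi^{(k)}(t)\to 0$ for every $t>0$. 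Thus I may assume $d(f_i(x),f_i(y))\le\varphi(d_m(x,y))$ for all $i$ and all $x,y\in X^m$ with a single $\varphi$. In particular $\varphi(t)<t$ for $t>0$ (otherwise monotonicity would give $\varphi^{(k)}(t_0)\ge t_0$ for some $t_0>0$, contradicting convergence of the iterates), and hence $0\le\varphi(t)<t$ forces $\varphi(0)=0$ and continuity of $\varphi$ at $0$.

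The core computation is to show that $\F\colon\K(X)^m\to\K(X)$ is itself a generalized Matkowski contraction of order $m$, with the same $\varphi$, for the maximum metric on $\K(X)^m$ induced by $H$. This rests on three standard properties of the Hausdorff--Pompeiu metric. For compact $K_1,\dots,K_m$ and $L_1,\dots,L_m$, the product sets satisfy, in $\K(X^m)$ with the metric $H_{d_m}$ induced by $d_m$, the identity $H_{d_m}(K_1\times\cdots\times K_m,\,L_1\times\cdots\times L_m)=\max_{1\le j\le m}H(K_j,L_j)$, which follows by choosing nearest points coordinatewise. Next, if $f\colon X^m\to X$ obeys $d(f(x),f(y))\le\varphi(d_m(x,y))$, then for compact $E,F\subseteq X^m$ one has $H(f(E),f(F))\le\varphi(H_{d_m}(E,F))$: given $u=f(x)$ with $x\in E$, compactness of $F$ yields $y\in F$ with $d_m(x,y)=\operatorname{dist}(x,F)\le H_{d_m}(E,F)$, whence $\operatorname{dist}(u,f(F))\le\varphi(d_m(x,y))\le\varphi(H_{d_m}(E,F))$ by monotonicity, and symmetrically. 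Finally, $H(\bigcup_i A_i,\bigcup_i B_i)\le\max_i H(A_i,B_i)$. Chaining these gives, for $\bar K=(K_1,\dots,K_m)$ and $\bar L=(L_1,\dots,L_m)$,
$$
H(\F(\bar K),\F(\bar L))\le\max_{1\le i\le n}H\big(f_i(K_1\times\cdots\times K_m),f_i(L_1\times\cdots\times L_m)\big)\le\varphi\Big(\max_{1\le j\le m}H(K_j,L_j)\Big),
$$
which is exactly the required contraction estimate on $(\K(X),H)$.

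It then remains to prove the abstract statement: if $(Y,d)$ is complete and $G\colon Y^m\to Y$ satisfies $d(G(y),G(y'))\le\varphi(d_m(y,y'))$, then $G$ has a unique fixed point $a$ (meaning $G(a,\dots,a)=a$) and every sequence with $y_{k+m}=G(y_k,\dots,y_{k+m-1})$ converges to $a$; applying this to $Y=\K(X)$ and $G=\F$ yields a limit set $A_\F$ with $A_\F=\F(A_\F,\dots,A_\F)$ and finishes the proof. Uniqueness is immediate, since two fixed points $a,a'$ give $d(a,a')\le\varphi(d(a,a'))<d(a,a')$ unless $a=a'$. For existence and convergence I would track the consecutive gaps $\delta_k:=d(y_k,y_{k+1})$ and the block maxima $R_k:=\max\{\delta_k,\dots,\delta_{k+m-1}\}$. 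The recursion gives $\delta_{k+m}\le\varphi(R_k)\le R_k$, so $(R_k)$ is nonincreasing, and moreover $R_{k+m}\le\varphi(R_k)$; iterating along arithmetic progressions of step $m$ gives $R_{k+jm}\le\varphi^{(j)}(R_k)\to 0$, hence $R_k\to 0$ and $\delta_k\to 0$.

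The \textbf{main obstacle} is upgrading $\delta_k\to 0$ to the Cauchy property of $(y_k)$, where the full force of $\varphi^{(k)}(t)\to 0$ (rather than merely $\varphi(t)<t$) is needed and where the possible discontinuity of $\varphi$ makes the argument delicate. I would proceed in the classical Matkowski style: first establish the comparison lemma that $\varphi(\varepsilon^{+}):=\inf_{t>\varepsilon}\varphi(t)\le\varepsilon$ for every $\varepsilon>0$ (a consequence of monotonicity together with $\varphi^{(k)}(t)\to 0$), then argue by contradiction. Assuming $(y_k)$ is not Cauchy, one fixes $\varepsilon>0$ and chooses indices $m_k<n_k$ with $d(y_{m_k},y_{n_k})\ge\varepsilon$ and $n_k$ minimal, so $d(y_{m_k},y_{n_k})\to\varepsilon$ from above; comparing this pair with the shifted pair $(y_{m_k+m},y_{n_k+m})$ through $m$ triangle inequalities and the bound $d(y_{m_k+m},y_{n_k+m})\le\varphi(\max_{0\le r<m}d(y_{m_k+r},y_{n_k+r}))$, while absorbing the error terms via $\delta_k\to 0$ and using monotonicity of $\varphi$, leads to a contradiction with the comparison lemma. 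The delicate point is the boundary case $\varphi(\varepsilon^{+})=\varepsilon$, which I would circumvent by selecting the threshold at a continuity level of $\varphi$ (equivalently, by a more careful choice of indices). Completeness of $Y$ then supplies $a=\lim_k y_k$, and passing to the limit in $y_{k+m}=G(y_k,\dots,y_{k+m-1})$ — legitimate because $\varphi(0)=0$ makes $G$ continuous — gives $G(a,\dots,a)=a$, as required.
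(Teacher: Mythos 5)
The paper does not actually prove this theorem: it is imported from \cite{MM1}, \cite{MM2}, \cite{SS1}, \cite{SS2}, so there is no internal proof to compare against. Your reconstruction follows the route of those references and is essentially correct: the reduction to a single modulus $\varphi$ (the paper itself uses this "take the maximum" device in the proof of Lemma \ref{filip4}), the verification that $\F$ is a generalized Matkowski contraction on $(\K(X),H)$ via the product formula for the Hausdorff distance and the estimate $H(f(E),f(F))\le\varphi(H_{d_m}(E,F))$ (which is precisely \cite[Theorem 3.7]{SS1}, again invoked by the paper in Lemma \ref{filip4}), and the block-maximum bookkeeping $R_{k+m}\le\varphi(R_k)$ for the $m$-step recursion are all sound. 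The one place you stop short is the Cauchy step, but the "boundary case" you flag is genuinely resolvable rather than fatal: if $(y_k)$ failed to be Cauchy, your argument could be run for every threshold $\varepsilon$ in a whole interval $(0,\rho)$ and would force $\inf_{t>\varepsilon}\varphi(t)=\varepsilon$ for each such $\varepsilon$, which already gives $\varphi(t)\ge t$ on $(0,\rho)$ and contradicts $\varphi(t)<t$; so no careful choice of continuity level is actually needed. That said, the delicate Cauchy analysis can be bypassed entirely: the diagonal operator $\tilde\F(K):=\F(K,\dots,K)$ satisfies $H(\tilde\F(K),\tilde\F(L))\le\varphi(H(K,L))$, so Matkowski's classical one-variable fixed point theorem already yields existence and uniqueness of $A_\F$; convergence of an arbitrary $m$-step sequence then follows from your own block-maximum argument applied to $e_k:=H(K_k,A_\F)$ and $S_k:=\max\{e_k,\dots,e_{k+m-1}\}$, which gives $S_{k+m}\le\varphi(S_k)$ and hence $S_k\to 0$ with no Cauchy argument at all. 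The paper's remark on $\tilde\F$ at the end of Section 2 points at exactly this shortcut.
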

In the above frame, the set $A_\F$ is called the \emph{fractal generated by $\F$}. Also, a compact set $A\subset X$ is called \emph{a Hutchinson-Barnsley generalized fractal}, if it is a fractal generated by some GIFS.\\
Note that GIFSs give us some new fractals - there are sets which are attractors of some GIFSs, but are not attractors of any IFSs (even consisting of Matkowski contractions) and there are compact sets which are not attractors of any GIFSs - see \cite{S}. Also see \cite{MM2} for another example.\\
We will also need some facts which involves the notion of a \emph{code space} for GIFSs (for detailed discussion see \cite{M1}, \cite{SS2} and \cite{SRum}). The constructions are a bit technically complicated, but, in fact, they are natural counterparts of classical case of IFSs.\\
So assume that $\F=\{f_1,...,f_n\}$ is a GIFS of order $m$, on a space $X$.\newline
Define $\Omega _{1},\Omega _{2},\ldots 
$ by the following inductive formula:
\begin{equation*}
\Omega _{1}:=\{1,...,n\} 
\end{equation*}%
\begin{equation*}
\Omega _{k+1}:=\underbrace{\Omega _{k}\times \ldots \times \Omega _{k}}_{m%
\mbox{ times}}\;\;\;\mbox{for }k\geq 1
\end{equation*}%
\indent Then for every $k\in\mathbb{N}$, let 
\begin{equation*}
{}_{k}\Omega :=\Omega _{1}\times \ldots \times \Omega _{k}
\end{equation*}%
and define 
\begin{equation*}
\Omega _{<}:=\bigcup_{k\in\mathbb{N}}{}_{k}\Omega 
\end{equation*}%
and 
\begin{equation*}
\Omega :=\Omega _{1}\times \Omega _{2}\times \Omega _{3}\times \ldots =%
\underset{i\in\mathbb{N}}{\Pi }\Omega _{i}.
\end{equation*}

The space $\Omega $ is called a \emph{code space for $\F$}.

\begin{remark}
\emph{\ In the case $m=1$ we have 
\begin{equation*}
\Omega =\{1,...,n\}^\N
\end{equation*}%
so $\Omega$ is the standard code space for an iterated function systems consisting of $n $
mappings (see \cite{H},\cite{B}).}
\end{remark}
Now we will define the families of mappings $\mathcal{F}^{k}$, $k\in\mathbb{N}$,
inductively with respect to $k$.\newline
If $k>1$ and 
\begin{equation*}
\alpha =(\alpha ^{1},\ldots ,\alpha ^{k})=(\alpha ^{1},(\alpha
_{1}^{2},\ldots ,\alpha _{m}^{2}),\ldots ,(\alpha _{1}^{k},\ldots ,\alpha
_{m}^{k}))\in {}_{k}\Omega 
\end{equation*}%
then for any $i=1,\ldots ,m$, we set 
\begin{equation}
\alpha (i):=(\alpha _{i}^{2},\alpha _{i}^{3},\ldots ,\alpha _{i}^{k}).
\label{n1}
\end{equation}%
Clearly, $\alpha (i)\in {}_{k-1}\Omega $.\newline
If $\alpha \in \Omega $, we define $\alpha (i)\in \Omega $ in an analoguos
way.\newline
Also, define spaces $X_1,X_2,...$ be the following inductive formula:
\begin{equation}\label{filip1}X_1:=\underbrace{X\times \ldots \times X}_{m%
\mbox{ times}}\end{equation}
\begin{equation}\label{filip2}
X_{k+1}:=\underbrace{X_{k}\times \ldots \times X_{k}}_{m%
\mbox{ times}}\end{equation}
We are ready to define the families $\mathcal{F}^{k}$, $k\in\mathbb{N}$.\newline
Define 
\begin{equation*}
\mathcal{F}^{1}:=\mathcal{F}
\end{equation*}%
and observe that (since $_1\Omega=\Omega_1=\{1,...,n\}$) we can write $\mathcal{F}^{1}=\{f_{\alpha }:\alpha \in \;_{1}\Omega \}$
and each $f_{\alpha }\in \mathcal{F}^{1}$ is a function from $X_{1}$ to $X$.

Assume that we have already defined $\mathcal{F}^k=\{f_\alpha:\alpha\in\;_k%
\Omega\}$ such that each $f_\alpha\in\mathcal{F}^k$ is a function from $X_k$
to $X$. Then for every $\alpha=(\alpha^1,\ldots ,\alpha^k,\alpha^{k+1})\in{}%
_{k+1}\Omega$, let $f_\alpha:X_{k+1}\to X$ be defined by 
\begin{equation*}
f_\alpha(x_1,\ldots ,x_m)=f_{\alpha^1}(f_{\alpha(1)}(x_1),\ldots
,f_{\alpha(m)}(x_m)),
\end{equation*}
for each $(x_{1},...,x_m) \in X_{k}\times\dots\times X_k=X_{k+1}$.\newline
Then set 
\begin{equation*}
\mathcal{F}^{k+1}:=\{f_\alpha:\alpha\in\;_{k+1}\Omega\} 
\end{equation*}
Finally, define $\F^{<}:=\bigcup_{k\in\N}\F^k=\{f_\alpha:\alpha\in\Omega_{<}\}$.

\begin{remark}
\emph{\ Clearly, in the case when $m=1$, if $\alpha=(\alpha^1,...,\alpha^k)%
\in\;_k\Omega$, then $f_\alpha=f_{\alpha_1}\circ...\circ f_{\alpha_k}$,
hence defined families of mappings are natural generalizations of
compositions.}
\end{remark}
If $D\in\K(X)$, then we define sets $D_1,D_2,...$ in a similar way as spaces $X_1,X_2,...$ in (\ref{filip1}) and (\ref{filip2}), and then, for every $k\in\N$ and $\alpha\in\;_k\Omega$, we define
$$
D_\alpha:=f_\alpha(D_k)
$$
Also, we set $A_\alpha:=(A_\F)_\alpha$ for $\alpha\in\Omega_<$.\\
The following result can be found in \cite{SS2}.
\begin{lemma}\label{l1}
Let $\F$ be a GIFS of order $m$ on a complete metric space $X$. Then
\begin{itemize}
\item[(a)] For every $D\in\K(X)$, $\lim_{k\to\infty}\max\{\operatorname{diam}D_\alpha:\alpha\in\;_k\Omega\}=0$.
\item[(b)] For every $k\in\N$, $A_\F=\bigcup_{\alpha\in\;_k\Omega}A_\alpha$.
\end{itemize}
\end{lemma}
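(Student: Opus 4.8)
The plan is to handle the two parts separately: part (a) by a diameter-contraction estimate that follows the recursive definition of the maps $f_\alpha$, and part (b) by induction on $k$ with the fixed point identity of Theorem \ref{f3} serving as the base case.

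For (a) the first step is to replace the individual moduli of $f_1,\dots,f_n$ by a single comparison function. Since $\F$ is finite, I would set $\varphi:=\max\{\varphi_1,\dots,\varphi_n\}$, where $\varphi_j$ is a nondecreasing modulus witnessing that $f_j$ is a generalized Matkowski contraction; then $\varphi$ is again nondecreasing and, as one checks, still satisfies $\varphi^{(k)}(t)\to 0$ for every $t>0$, so it is again a comparison function. The second step is the key estimate. Fix $D\in\K(X)$ and write $\delta_k:=\max\{\operatorname{diam}D_\alpha:\alpha\in{}_k\Omega\}$, which is finite for every $k$ because each $D_\alpha$ is a continuous image of a compact set. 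For $\alpha\in{}_{k+1}\Omega$ the definition $f_\alpha(x_1,\dots,x_m)=f_{\alpha^1}(f_{\alpha(1)}(x_1),\dots,f_{\alpha(m)}(x_m))$ together with the contraction inequality for $f_{\alpha^1}$ gives, for any two points of $D_{k+1}=(D_k)^m$, a bound of the form $d(f_\alpha(x),f_\alpha(y))\le\varphi\bigl(\max_i d(f_{\alpha(i)}(x_i),f_{\alpha(i)}(y_i))\bigr)$. Since $x_i,y_i\in D_k$ and $f_{\alpha(i)}(D_k)=D_{\alpha(i)}$, each inner distance is at most $\operatorname{diam}D_{\alpha(i)}$, and as $\alpha(i)\in{}_k\Omega$ the monotonicity of $\varphi$ yields $\operatorname{diam}D_\alpha\le\varphi(\delta_k)$. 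Taking the maximum over $\alpha$ gives $\delta_{k+1}\le\varphi(\delta_k)$, hence $\delta_k\le\varphi^{(k-1)}(\delta_1)\to 0$.

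For (b) I would argue by induction on $k$. The base case $k=1$ is immediate: ${}_1\Omega=\{1,\dots,n\}$, each $f_j$ acts on $(A_\F)_1=A_\F^m$, so $\bigcup_j A_j=\bigcup_j f_j(A_\F^m)=\F(A_\F,\dots,A_\F)=A_\F$ by the fixed point identity of Theorem \ref{f3}. For the inductive step, the crucial observation is that for $\beta=(\beta^1,\dots,\beta^{k+1})\in{}_{k+1}\Omega$ the recursive definition of $f_\beta$ unwinds to $A_\beta=f_{\beta^1}\bigl(A_{\beta(1)}\times\dots\times A_{\beta(m)}\bigr)$, where each $\beta(i)\in{}_k\Omega$. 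The second observation is a reindexing: the assignment $\beta\mapsto(\beta^1,\beta(1),\dots,\beta(m))$ is a bijection between ${}_{k+1}\Omega$ and $\Omega_1\times({}_k\Omega)^m$. Combining these with the elementary identities $\bigcup_{(\gamma_1,\dots,\gamma_m)}(A_{\gamma_1}\times\dots\times A_{\gamma_m})=\bigl(\bigcup_\gamma A_\gamma\bigr)^m$ and $f_j\bigl(\bigcup_s S_s\bigr)=\bigcup_s f_j(S_s)$, and then invoking the induction hypothesis $\bigcup_{\gamma\in{}_k\Omega}A_\gamma=A_\F$, I obtain $\bigcup_{\beta\in{}_{k+1}\Omega}A_\beta=\bigcup_{j}f_j(A_\F^m)=A_\F$, again by Theorem \ref{f3}.

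I expect the only genuinely delicate point to be the verification that the maximum $\varphi$ of finitely many comparison functions is again a comparison function: the naive implication from $\varphi(t)<t$ does not suffice, since a nondecreasing $\varphi$ with $\varphi(t)<t$ for all $t>0$ may still have iterates converging to a positive limit. One has to argue that, were $\varphi^{(k)}(t_0)$ to decrease to some $L>0$, then picking at each step the index attaining the maximum and using the pigeonhole principle, some $\varphi_j$ would satisfy $\varphi_j(s)>L$ on a whole right-neighbourhood of $L$, forcing its own iterates to stay above $L$ and contradicting $\varphi_j^{(k)}\to 0$. The remaining bookkeeping in (b) — the unwinding of $f_\beta$ and the reindexing of the union — is routine but must be carried out carefully to keep the multi-indices consistent.
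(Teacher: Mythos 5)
Your proof is correct. Note, though, that the paper does not prove Lemma \ref{l1} at all --- it simply cites \cite{SS2} --- so there is no internal proof to compare against; what you have written is a self-contained reconstruction of the standard argument. Both halves check out: in (a) the recursion $f_\alpha(x_1,\dots,x_m)=f_{\alpha^1}(f_{\alpha(1)}(x_1),\dots,f_{\alpha(m)}(x_m))$ together with the maximum metric on $X^m$ does give $\operatorname{diam}D_\alpha\le\varphi(\max_i\operatorname{diam}D_{\alpha(i)})$, hence $\delta_{k+1}\le\varphi(\delta_k)$ and $\delta_k\le\varphi^{(k-1)}(\delta_1)\to 0$; in (b) the identity $A_\beta=f_{\beta^1}(A_{\beta(1)}\times\dots\times A_{\beta(m)})$ and the bijection $\beta\mapsto(\beta^1,\beta(1),\dots,\beta(m))$ between ${}_{k+1}\Omega$ and $\Omega_1\times({}_k\Omega)^m$ reduce the union over ${}_{k+1}\Omega$ to $\bigcup_j f_j(A_\F^m)=A_\F$, since the union of the products $A_{\gamma_1}\times\dots\times A_{\gamma_m}$ over independently ranging indices is indeed $\bigl(\bigcup_\gamma A_\gamma\bigr)^m$. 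You also correctly isolated the one genuinely delicate point: that the pointwise maximum of finitely many Matkowski comparison functions is again one. The paper elsewhere (proof of Lemma \ref{filip4}) dismisses this with ``clearly, by choosing the maximum function,'' but as you observe the naive argument via $\varphi(t)<t$ is insufficient, and your pigeonhole argument --- if $\varphi^{(k)}(t_0)\downarrow L>0$ then some single $\varphi_j$ attains the maximum infinitely often, forcing $\varphi_j(s)>L$ for every $s>L$ and hence $\varphi_j^{(k)}(s)>L$ for all $k$, a contradiction --- closes that gap properly.
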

We will also need the following
\begin{lemma}\label{filip4}
Let $\F$ be a GIFS of order $m$ on a complete metric space $X$.
For every closed and bounded set $D\subset X$ and $k\in\N$, define $$K_k=\bigcup_{\alpha\in\;_k\Omega}f_\alpha(D_k)$$
where $D_1,D_2,...$ are defined as in (\ref{filip1}) and (\ref{filip2}). Then $K_k\to A_\F$ with respect to the Hausdorff-Pompeiu metric.
\end{lemma}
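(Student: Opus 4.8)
The plan is to compare $K_k$ with the decomposition $A_\F=\bigcup_{\alpha\in\;_k\Omega}A_\alpha$ of the attractor furnished by Lemma \ref{l1}(b), term by term over the common index set $\;_k\Omega$, and to control each term by a diameter estimate coming from the generalized Matkowski contractivity of the maps $f_\alpha$.

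First I would reduce everything to a single diameter bound. Set $E:=D\cup A_\F$; since $D$ is closed and bounded and $A_\F$ is compact (hence bounded), $E$ is closed and bounded. Forming $E_1,E_2,\ldots$ as in (\ref{filip1})--(\ref{filip2}), monotonicity of the power construction gives $D_k\subseteq E_k$ and $(A_\F)_k\subseteq E_k$, so that $f_\alpha(D_k)\subseteq f_\alpha(E_k)$ and $f_\alpha((A_\F)_k)\subseteq f_\alpha(E_k)$ for every $\alpha\in\;_k\Omega$. Using the elementary Hausdorff-metric facts $H\big(\bigcup_\alpha B_\alpha,\bigcup_\alpha C_\alpha\big)\le\max_\alpha H(B_\alpha,C_\alpha)$ and ``$B,C\subseteq F\Rightarrow H(B,C)\le\operatorname{diam}F$'', together with $A_\alpha=f_\alpha((A_\F)_k)$ and $K_k=\bigcup_{\alpha\in\;_k\Omega}f_\alpha(D_k)$, I obtain
\[
H(K_k,A_\F)\le\max_{\alpha\in\;_k\Omega}H\big(f_\alpha(D_k),f_\alpha((A_\F)_k)\big)\le\max_{\alpha\in\;_k\Omega}\operatorname{diam}f_\alpha(E_k).
\]
Thus it suffices to prove $\max_{\alpha\in\;_k\Omega}\operatorname{diam}f_\alpha(E_k)\to0$.

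For this diameter estimate I would fix a common nondecreasing $\varphi$ with $\varphi^{(k)}(t)\to0$ dominating the contraction functions of $f_1,\ldots,f_n$, and prove by induction on $k$ that $\operatorname{diam}f_\alpha(E_k)\le\varphi^{(k)}(\operatorname{diam}E)$ for all $\alpha\in\;_k\Omega$. The base case $k=1$ is just the defining contraction inequality. The inductive step uses the recursion $f_\alpha(x_1,\ldots,x_m)=f_{\alpha^1}(f_{\alpha(1)}(x_1),\ldots,f_{\alpha(m)}(x_m))$: applying contractivity of $f_{\alpha^1}$ in the maximum metric and then the inductive hypothesis to each coordinate map $f_{\alpha(i)}$, and noting that $\operatorname{diam}E_k=\operatorname{diam}E$ in the maximum metric, yields the stated bound. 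Since $\operatorname{diam}E<\infty$, the Matkowski property gives $\varphi^{(k)}(\operatorname{diam}E)\to0$, which closes the argument.

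The step I expect to be the main (though minor) obstacle is that Lemma \ref{l1}(a) is stated only for $D\in\K(X)$, whereas here $E$ is merely closed and bounded, so it cannot be quoted verbatim; the remedy is precisely the induction above, whose proof is insensitive to compactness because it manipulates only diameters and contraction constants. A secondary point to verify is that $K_k$ need not be compact when $D$ is noncompact, but this is harmless: $A_\F\neq\emptyset$, so all sets involved are nonempty, and the Hausdorff--Pompeiu distance is unchanged under passing to closures. Finally, the choice of the common $\varphi$ (for instance $\varphi=\max_i\varphi_i$, which is again nondecreasing and, via the characterization $\varphi_i(t)<t$ for $t>0$, still satisfies $\varphi^{(k)}(t)\to0$) is routine.
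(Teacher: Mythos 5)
Your proof is correct and follows essentially the same route as the paper: decompose $A_\F=\bigcup_{\alpha\in\;_k\Omega}A_\alpha$ via Lemma \ref{l1}(b), compare the two unions termwise over $\;_k\Omega$ using $H(\bigcup_\alpha B_\alpha,\bigcup_\alpha C_\alpha)\le\max_\alpha H(B_\alpha,C_\alpha)$, and kill each term by an inductive $\varphi^{(k)}$-estimate. The only difference is that the paper bounds $H(A_\alpha,f_\alpha(D_k))\le\varphi^{(k)}(H(A_\F,D))$ directly (citing \cite[Theorem 3.7]{SS1} for $k=1$ and inducting), whereas you sandwich both sets into $f_\alpha(E_k)$ with $E=D\cup A_\F$ and bound $\operatorname{diam}f_\alpha(E_k)\le\varphi^{(k)}(\operatorname{diam}E)$ — a self-contained and equally valid variant of the same estimate.
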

\begin{proof}
By the previous Lemma, for every $k\in\N$, we have $A_\F=\bigcup_{\alpha\in\;_k\Omega}A_\alpha$. Hence, using a standard property of the Hausdorff-Pompeiu metric, we get
$$
H(A_\F,K_k)=H\left(\bigcup_{\alpha\in\;_k\Omega}A_\alpha,\bigcup_{\alpha\in\;_k\Omega}f_\alpha(D_k)\right)\leq
\max\left\{H\left(A_\alpha,f_\alpha(D_k)\right):\alpha\in\;_k\Omega\right\}\leq\varphi^{(k)}(H(A_\F,D))
$$
where $\varphi$ is a witness to the fact that $\F$ is a GIFS (clearly, by choosing the maximum function, we can assume that we have one function $\varphi$ for all $f_1,...,f_n$). Note that the last inequality can be proved in a standard way - the case $k=1$ can be proved as \cite[Theorem 3.7]{SS1}, and then we can proceed by induction.\\
Now, using the known (and easy) fact that $\varphi^{(k)}(t)\to 0$ for all $t\geq 0$, we get the thesis.
\end{proof}
In the next section we will introduce the counterpart of deterministic algorithm for GIFS. Section 3 is devoted to counterpart of chaos game algorithm for GIFSs. Finally, in Section 4 we will study affine GIFSs, and introduce (deterministic) algorithm for such GIFSs.
\section{Deterministic Algorithm for GIFSs}
Assume that $\F$ is a GIFS of order $m$ on a complete metric space $X$. The pseudocode for {deterministic algorithm} for $\F$ is the following
\begin{center}\textbf{Pseudocode for deterministic algorithm for GIFSs}\end{center}
Initially chosen compact sets: $K_0,...,K_{m-1}\subset X$.\\
Initially defined objects: constant: $m$, mappings: $f_1,...,f_n$, variables: $i,D_0,...,D_{m-1}$.\\
Initial values: $D_0:=K_0$,...,$D_{m-1}:=K_{m-1}$.\\
Main loop: 

$\;\;\;\;\;\;\;\;\;\;\;\;\;$ $K:=\F(D_0,...,D_{m-1})$

$\;\;\;\;\;\;\;\;\;\;\;\;\;$  For $i$ from $0$ to $m-2$

$\;\;\;\;\;\;\;\;\;\;\;\;\;\;\;\;\;\;\;$ $D_i:=D_{i+1}$

$\;\;\;\;\;\;\;\;\;\;\;\;\;$  $D_{m-1}:=K$\\


By Theorem \ref{f3}, sets $K$ are closer and closer to the fractal $A_\F$.\\
Now we give some examples:
\begin{example}\label{e1}\emph{
Consider the GIFSs $\F=\{f_1,f_2\}$ and $\G=\{g_1,g_2\}$  on $\R^2$, where, for $x=(x_1,x_2)$ and $y=(y_1,y_2)$, we have\\
$\;$\\
$
f_1(x,y):=(0,1x_1+0,15y_1+0,04y_2\;\mbox{\textbf{;}}\;0,16x_2-0,04y_1+0,15y_2+1,6)
$\\
$
f_2(x,y):=(0,1x_1-0,15x_2-0,1y_1+0,15y_2+1,6\;\mbox{\textbf{;}}\;0,15x_1+0,15x_2+0,15y_1+0,07)
$\\$\;$\\
$
g_1(x,y):=(0,05x_1+0,02y_1+0,1y_2+0,635\;\mbox{\textbf{;}}\;0,1x_1+0,2x_2+0,08y_1+0,15y_2+0,5)
$\\
$
g_2(x,y):=(0,15x_1+0,05y_1+0,1y_2+0,5\;\mbox{\textbf{;}}\;0,15x_1+0,15x_2+0,45)
$\\$\;$\\
It can be easily seen that $\F$ and $\G$ are GIFSs, indeed.\\
Using deterministic algorithm for these GIFSs, we get the following images:}
\begin{center}
\includegraphics{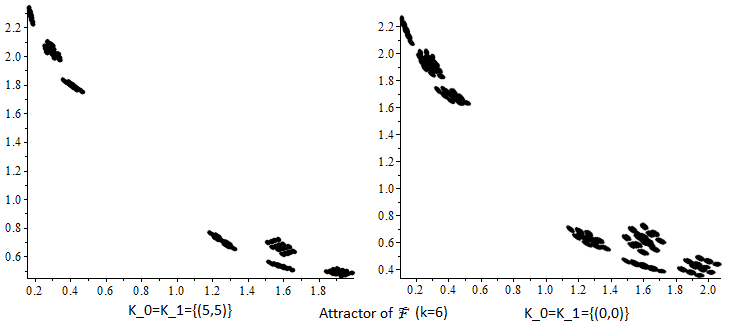}
\end{center}
\begin{center}
\includegraphics{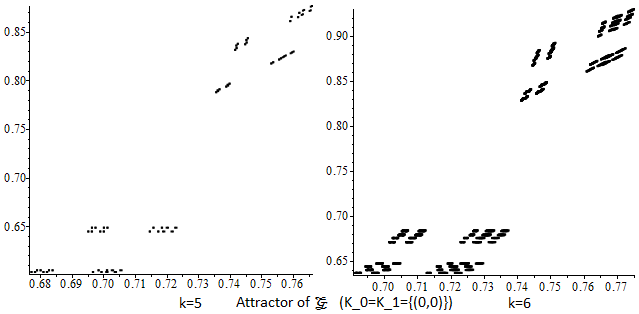}
\end{center}
\end{example}
\begin{remark}\emph{
Observe that if $\F$ is a GIFS on $X$ and $K_0,...,K_{m-1}\in\K(X)$ are such that $K_i\subset A_\F$ for $i=0,...,m-1$, then each $K_k\subset A_\F$, where $(K_k)$ is defined as in Theorem \ref{f3}. Hence, if we assure our start sets are subsets of the attractor, then we can draw all the points from each step.
}\end{remark}
\begin{remark}\emph{
Assume that $\F$ is a GIFS on $X$ and $K_0,...,K_{m-1}\in\K(X)$ such that the cardinality $card(K_i)\leq M$ for $i=0,...,m-1$. By an inductive argument, we can calculate that for every $k\in\N$,
$card(K_{m+k})\leq n^{2^k}M^{2^k(m-1)+1}$, where $(K_k)$ is defined as in Theorem \ref{f3}. Hence we can easily estimate the maximal required memory for calculating $K_{m+k}$ (we need to remember  
$
card(K_{k})+...+card(K_{k+m-1})
$
points).
}\end{remark}

\begin{remark}
\emph{Note that if $\F$ is a GIFS on $X$ such that $c:=\max\{Lip(f):f\in\F\}<1$, and $K_0=...=K_{m-1}=K$ for some $K\in\K(X)$, then, using standard properties of the Hausdorff-Pompeiu metric, for every $k\geq 0$, the distance
$$
H(K_k,A_\F)\leq c^{\lfloor \frac{k}{m} \rfloor}H(K,A_\F)
$$
where $(K_k)$ is defined as in Theorem \ref{f3}. This gives some control on the speed of convergence of the sequence $(K_k)$ to the attractor.
}
\end{remark}
\begin{remark}\emph{Observe that one can get the attractor $A_\F$ of a GIFS $\F$ by using another version of deterministic algorithm. Indeed, consider the mapping $\tilde{\F}:\K(X)\to\K(X)$ given by 
$$
\tilde{\F}:=f_1(K, ..., K) \cup ... \cup f_n(K, ...,K)
$$
Then $\tilde{\F}$ is a Matkowski contraction, so each sequence of iterations $(\tilde{\F}^k(K))$ converges to $A_\F$. Note that if $c:=\max\{Lip(f):f\in\F\}<1$, then $Lip(\tilde{F})\leq c$, so the speed of convergence of $(\tilde{\F}^k(K))$ is $\leq c^k$, so such an approach seems to be even more efficient.
}\end{remark}

\section{chaos game algorithm}
The counterpart of chaos game for GIFSs will be much more complicated, as the counterpart for composition for GIFSs is so.


At first we define a certain bijection $H:\N\times\N\to\N$:\\
Set $H(1,1):=1$. Assume that we have defined $H(j,k)$ for some $(j,k)$\\
-- if $j\on{mod} m\neq 0$, then we set $H(m^{k-1}\cdot j+1,1):=H(j,k)+1$,\\
-- if $j\on{mod} m=0$, then we set $H\left(\frac{j}{m},k+1\right):=H(j,k)+1$.
\begin{remark}\label{filip3}\emph{
Note that the definition of $H$ is correct - in Remark \ref{f3} below it can be seen how it works (for $m=2$) - the rows are connected to $"$levels$"$ $k$, and each row, the coefficient $j$ changes according to enumeration from the left. So, for example (when $m=2$), we have $H(1,1)=1$, $H(2,1)=2$, $H(1,2)=3$, $H(3,1)=4$, $H(4.1)=5$, $H(2,2)=6$, $H(1,3)=7$, $H(5,1)=8$,...,$H(2,3)=14$, $H(1,4)=15$,...}
\end{remark}
Now let $\F=\{f_1,...,f_n\}$ be a GIFS of order $m$ on a complete space $X$, and choose a sequence  $(\gamma_i)\in\{1,...,n\}^\N$ and $x_0\in X$. Then we can define the sequence $(x_i)\subset X$
by the following recursive formula:
$$x_1:=f_{\gamma_{1}}(x_0,...,x_0),\;x_2:=f_{\gamma_{2}}(x_0,...,x_0),\;...,\;x_{m}:=f_{\gamma_{m}}(x_0,...,x_0)$$
and if $i\geq m+1$ and $i=H(j,k)$, then:

$\;\;\;\;\;$if $k>1$, then $$x_i=x_{H(j,k)}:=
f_{\gamma_i}\left(x_{H(mj-m+1,k-1)},...,x_{H(mj,k-1)}\right)$$

$\;\;\;\;\;$and if $k=1$, then 
$$x_i:=f_{\gamma_i}(x',...,x'),$$ 

$\;\;\;\;\;$where $x':=x_{l}$ and $l:=H(j',k')$ is maximal number with the property that $l<i$ and $k'>1$. 

\begin{remark}\label{f3}\emph{
Observe that the definition of the sequence $(x_i)$ is, in fact, very natural. For example, if $m=2$, then we have:}\\

$k=1\;\;\;\;\;\;x_1\;\;\;\;\;\;\;\;\;\;\;\;\;\;\;\;x_2\;\;\;\;\;\;\;\;\;\;\;\;\;\;\;x_4\;\;\;\;\;\;\;\;\;\;\;\;\;\;\;x_5\;\;\;\;\;\;\;\;\;\;\;\;\;\;\;x_8\;\;\;\;\;\;\;\;\;\;\;\;\;\;\;x_9\;\;\;\;\;\;\;\;\;\;\;\;\;x_{11}\;\;\;\;\;\;\;\;\;\;\;\;\;\;\;x_{12}$\\

$k=2\;\;\;\;\;\;\;\;\;\;\;\;\;x_3=f_{\gamma_3}(x_1,x_2)\;\;\;\;\;\;\;\;\;\;\;\;x_6=f_{\gamma_6}(x_4,x_5)\;\;\;\;\;\;\;\;\;\;\;x_{10}=f_{\gamma_{10}}(x_8,x_9)\;\;\;\;\;\;\;\;x_{13}=f_{\gamma_{13}}(x_{11},x_{12})$\\

$k=3\;\;\;\;\;\;\;\;\;\;\;\;\;\;\;\;\;\;\;\;\;\;\;\;\;\;\;\;\;\;\;\;\;x_{7}=f_{\gamma_7}(x_3,x_6)\;\;\;\;\;\;\;\;\;\;\;\;\;\;\;\;\;\;\;\;\;\;\;\;\;\;\;\;\;\;\;\;\;\;\;\;\;\;\;\;\;\;\;\;\;\;\;\;x_{14}=f_{\gamma_{14}}(x_{10},x_{13})$\\

$k=4\;\;\;\;\;\;\;\;\;\;\;\;\;\;\;\;\;\;\;\;\;\;\;\;\;\;\;\;\;\;\;\;\;\;\;\;\;\;\;\;\;\;\;\;\;\;\;\;\;\;\;\;\;\;\;\;\;\;\;\;\;\;\;\;\;\;\;\;\;\;\;x_{15}=f_{\gamma_{15}}(x_{7},x_{14})$\\

\emph{and} $x_4=f_{\gamma_4}(x_3,x_3)$, $x_5=f_{\gamma_5}(x_3,x_3)$, $x_8=f_{\gamma_8}(x_7,x_7)$, $x_9=f_{\gamma_9}(x_7,x_7)$, $x_{11}=f_{\gamma_{11}}(x_{10},x_{10})$, $x_{12}=f_{\gamma_{12}}(x_{10},x_{10})$, and, for example, $x_{16}=f_{\gamma_{16}}(x_{15},x_{15})$.

\end{remark}
The following result gives a justification for the correctness of chaos game algorithm, which will be presented later. The idea base on the proof of a chaos game for IFSs presented in \cite{Martyn}.\\
If $p_1,...,p_m>0$ are such that $p_1+...+p_m=1$, then on the space $\Omega=\{1,...,m\}^\N$ we consider the product probability measure generated by a measure $P(\{i\})=p_i$ for $i\in\{1,...,m\}$ (see \cite{Mi} for deeper discussion connected with space of such measures endowed to GIFSs). 
\begin{theorem}\label{cg-main}
In the above setting, put $K_k:=\overline{\{x_i:i\geq k\}}$ for $k\in\N$. Then with probability $1$, we have that $K_k\to A_\F$ with respect to the Hausdorff-Pompeiu metric, and, moreover, if $x_0\in A_\F$, then $K_k=A_\F$ for every $k\in\N$.
\end{theorem}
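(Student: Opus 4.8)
The plan is to exploit that the sets $K_k=\overline{\{x_i:i\ge k\}}$ form a \emph{decreasing} sequence, so that (once we know they are compact) $K_k$ automatically converges to $\bigcap_k K_k$ in the Hausdorff--Pompeiu metric; the whole theorem then reduces to proving that, with probability $1$, $\bigcap_k K_k=A_\F$. I would establish this by two independent inclusions: a deterministic inclusion $\bigcap_k K_k\subseteq A_\F$, valid for every choice of $x_0$ and $(\gamma_i)$, and a probabilistic inclusion $A_\F\subseteq\bigcap_k K_k$ holding almost surely. Compactness of each $K_k$ will be a by-product of the first inclusion.

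First I would prove the deterministic statement $\lim_{i\to\infty}d(x_i,A_\F)=0$. The elementary but crucial estimate is that, since $A_\F=f_1(A_\F^m)\cup\dots\cup f_n(A_\F^m)$, for any $y_1,\dots,y_m\in X$ one has $d(f_j(y_1,\dots,y_m),A_\F)\le\varphi(\max_i d(y_i,A_\F))$, where $\varphi$ is a common witness for $\F$; recall also that $\varphi(t)<t$ for $t>0$. I would then track the \emph{roots} $R_k:=x_{H(1,k)}$. Using that, right after $R_k$ is produced, every newly created point up to $R_{k+1}$ is built, through a tree of maps from $\F$, out of copies of $R_k$, and that the first argument of $R_{k+1}=f_{\gamma}(x_{H(1,k)},\dots,x_{H(m,k)})$ is $R_k$ itself, one gets by induction on the level that every $x_i$ with $H(1,k)<i\le H(1,k+1)$ satisfies $d(x_i,A_\F)\le\varphi(d(R_k,A_\F))$, and in particular $d(R_{k+1},A_\F)\le\varphi(d(R_k,A_\F))$. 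Hence $d(R_k,A_\F)\le\varphi^{(k-1)}(d(R_1,A_\F))\to0$, and therefore $d(x_i,A_\F)\to0$. This makes the orbit relatively compact, so each $K_k\in\K(X)$, and forces every accumulation point of the orbit into the closed set $A_\F$, i.e. $\bigcap_k K_k\subseteq A_\F$.

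For the reverse inclusion I would show that, almost surely, every $a\in A_\F$ is an accumulation point of the orbit. The structural observation is that each level-$k$ point can be written as $x_{H(j,k)}=f_\alpha(z)$ for the balanced code $\alpha\in{}_{k}\Omega$ obtained by reading the $\gamma$'s attached to the nodes of its tree, where $z\in X_k$ is a tuple of earlier orbit points, hence $z\in D_k$ for the compact set $D:=\overline{\{x_i:i\ge0\}}\cup A_\F$. The key combinatorial point, which I expect to be the main obstacle, is that for a fixed $k$ the trees of the distinct level-$k$ points $x_{H(1,k)},x_{H(2,k)},\dots$ use pairwise disjoint blocks of the sequence $(\gamma_i)$; consequently their codes are independent and each equals any prescribed $\alpha^*\in{}_{k}\Omega$ with one and the same probability $\rho>0$ (here I use that all $p_i>0$). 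Now fix $a\in A_\F$ and $\vp>0$. By Lemma \ref{l1}(a) applied to $D$ (or by the method of Lemma \ref{filip4}) I choose $k$ with $\max\{\operatorname{diam} D_\beta:\beta\in{}_{k}\Omega\}<\vp$, where $D_\beta=f_\beta(D_k)$, and by Lemma \ref{l1}(b) I choose $\alpha^*\in{}_{k}\Omega$ with $a\in A_{\alpha^*}$. Whenever the code of a level-$k$ point equals $\alpha^*$, both $x_{H(j,k)}=f_{\alpha^*}(z)$ and $a\in A_{\alpha^*}=f_{\alpha^*}((A_\F)_k)$ lie in $f_{\alpha^*}(D_k)=D_{\alpha^*}$, so $d(x_{H(j,k)},a)<\vp$. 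By the second Borel--Cantelli lemma this happens for infinitely many $j$ almost surely.

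Finally I would upgrade this to simultaneous density: intersecting the countably many full-probability events obtained by letting $a$ range over a countable dense subset of $A_\F$ and $\vp=1/q$, $q\in\N$, I get that almost surely every point of $A_\F$ is an accumulation point of the orbit, i.e. $A_\F\subseteq\bigcap_k K_k$. Combined with the deterministic inclusion this gives $\bigcap_k K_k=A_\F$ and hence $K_k\to A_\F$ almost surely. For the last assertion, if $x_0\in A_\F$ then $f_j(A_\F^m)\subseteq A_\F$ gives $x_i\in A_\F$ for all $i$ by induction, so $K_k\subseteq A_\F$ surely; together with $A_\F\subseteq\bigcap_k K_k\subseteq K_k$ this yields $K_k=A_\F$ for every $k$ on the same probability-one event. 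The main difficulty throughout is the bookkeeping for the irregular recursion defining $(x_i)$: identifying the balanced code of each level-$k$ point and proving the disjointness of the index blocks that underlies the independence needed for Borel--Cantelli.
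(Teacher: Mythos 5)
Your proposal is correct, but the route you take for a general starting point is genuinely different from the paper's. The paper also starts from the code argument in the case $x_0\in A_\F$ (your ``reverse inclusion'' is essentially that argument: the codes $\alpha_{(j,k)}$ of the level-$k$ points occupy pairwise disjoint index blocks of $(\gamma_i)$ --- your disjointness claim is correct, and is what the paper uses implicitly when it takes $k_i=2^i$ ``to have more and more space'' --- so Borel--Cantelli makes every word of ${}_k\Omega$ occur infinitely often and Lemma \ref{l1} finishes). But the paper then reduces the general case to this one by a coupling: it runs a second orbit $(y_i)$ from an arbitrary $y_0$ driven by the same $(\gamma_i)$ and proves $d(x_i,y_i)\to 0$ via the estimates (a)--(c) and a two-case subsequence analysis, which is the most technical part of its proof. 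You instead prove two inclusions directly: a deterministic root-tracking bound $d(x_i,A_\F)\le\varphi(d(R_k,A_\F))$ for $H(1,k)<i\le H(1,k+1)$, giving $d(x_i,A_\F)\to 0$ and compactness of the tails, and then the code argument rerun with $A_\alpha$ replaced by $D_\alpha$ for the compact set $D=\overline{\{x_i:i\ge 0\}}\cup A_\F$, invoking Lemma \ref{l1}(a) for an arbitrary compact set. This avoids the coupling estimates (b) and (c) altogether and is arguably cleaner; what the coupling buys the paper is that the code argument only ever needs to be run inside $A_\F$ itself. Two small repairs: the base case of your root recursion is off, since $x_2,\dots,x_m$ are built from $x_0$ rather than from $R_1=x_1$, so the chain should start from $d(x_0,A_\F)$ (or from $R_2$); and since your choice of $k$ depends on the random set $D$, it is cleanest to take the single full-measure event ``for every $k$, every $\alpha^*\in{}_k\Omega$ occurs as a code infinitely often'' and argue deterministically on it --- this also makes the countable-dense-subset step unnecessary.
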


\begin{proof}
We will just give the proof for the case $m=2$. The general case goes in the same way but is more technically complicated.\\
For any subsequence $(k_i)$ of naturals, consider the condition $(*)$ for a sequence $(\gamma_i)$:\\
$(*)\;\;$for every finite sequence $(\tilde{\gamma}_1,...,\tilde{\gamma}_l)$, there is infinitely many $i\in\N$ such that $\gamma_{k_i}=\tilde{\gamma}_1,...,\gamma_{k_i+l-1}=\tilde{\gamma}_l$.\\
By the Borel-Cantelli lemma (\cite{Fe}), condition $(*)$ is satisfied with a probability $1$ (i.e., there is a set $\mathcal{A}\subset \{1,...,n\}^\N$ such that $P(\mathcal{A})=1$, and for each $(\gamma_i)\in\mathcal{A}$, condition $(*)$ is satisfied).

For every $(j,k)\in\N\times\N$, we now define $\alpha_{(j,k)}\in\;_k\Omega$, inductively with respect to $k$.\\
So let $\alpha_{(j,1)}:=\gamma_{H(j,1)}$ for $j\in\N$
 and if $\alpha_{(j,k)}$ is defined for all $j\in\N$ and some $k\in\N$, then let 
$\alpha_{(j,k+1)}$ be such that $\alpha_{(j,k+1)}^1=\gamma_{H(j,k+1)}$ (i.e., the first coordinate is equal to $\gamma_{H(j,k+1)}$), and $\alpha_{(j,k+1)}(1)=\alpha_{(2j-1,k)}$ and $\alpha_{(j,k+1)}(2)=\alpha_{(2j,k)}$ (cf. (\ref{n1})).\\
\emph{For example, $\alpha_{(3,2)}=(\gamma_{10},(\gamma_8,\gamma_9))$ and $\alpha_{(1,3)}=(\gamma_7,(\gamma_{3},\gamma_6),((\gamma_1,\gamma_2),(\gamma_4,\gamma_5)))$, see Remark \ref{f3}).}\\
Assume first that $x_0\in A_\F$. 
By induction and the above construction it is easy to see that (recall the definition of $A_\alpha$ from earlier section):
\begin{equation}\label{f4}
x_{H(j,k)}\in A_{\alpha_{(j,k)}}\;\;\mbox{for all}\;\;(j,k)\in\N\times\N
\end{equation}
According to our observation from the beginning of the proof, with a probability $1$, each sequence $\alpha\in\Omega_k$ realizes as a sequence $\alpha_{(j,k)}$ infinitely many times (in a proper order - \emph{for example, to get $\alpha=(1,(2,3),((1,2),(4,3)))$, we have to choose $(1,2,2,4,3,3,1)$ - compare it with $\alpha_{(1,3)}$ above}) - we have to take, for example, a sequence $k_i=2^i$ (as we have to have more and more space). Hence, by (\ref{f4}), for every $\alpha\in\Omega_{<}$, there is infinitely many $i\in\N$ such that $x_i\in A_\alpha$. Thus, Lemma \ref{l1} implies that with a probability $1$, for every $k\in\N$, $\overline{\{x_i:i\geq k\}}=A_\F$, and the second assertion is proved.\\
Now assume that $y_0\in X$, and consider the sequence $(y_i)$ defined as the sequence $(x_i)$, but with $y_0$ as a starting point. In order to prove the first assertion, we only have to show that $d(x_i,y_i)\to 0$ (because then $H(\overline{\{x_i:i\geq k\}},\overline{\{y_i:i\geq k\}})\to 0$).\\
First we will prove that
\begin{itemize}
\item[(a)] $d\left(x_{H(j,k)},y_{H(j,k)}\right)\leq d(x_0,y_0)$ for all $(j,k)$;
\item[(b)] $d\left(x_{H(j,k)},y_{H(j,k)}\right)\leq \max\left\{\varphi^{(k-1)}\left(d(x_{H(2^{k-1}j-2^{k-1}+i,1)},y_{H(2^{k-1}j-2^{k-1}+i,1)}\right):\;1\leq i\leq 2^{k-1}\right\}$ for $(j,k)$;
\item[(c)] $d\left(x_{H(j,1)},y_{H(j,1)}\right)\leq \varphi\left(x_{H(1,{p+1})},y_{H(1,{p+1})}\right)$ where $j\geq 3$ and $p\geq 1$ is such that $2^p<j\leq 2^{p+1}$.
\end{itemize}
where $\varphi$ is a witness to the fact that $\F$ is a GIFS, and we also set $\varphi^0(t):=t$.\\
 \emph{ For example, $(b)$ says that $d(x_{14},y_{14})\leq \max\{\varphi^{(2)}(d(x_i,y_i)):i=8,9,11,12\}$ and $d(x_{15},y_{15})\leq \max\{\varphi^{(3)}(d(x_i,y_i)):i=1,2,4,5,8,9,11,12\}$,\\
and $(c)$ says that $d(x_5,y_5)\leq \varphi(d(x_3,y_3))$ and $d(x_{11},y_{11})\leq \varphi(d(x_7,y_7))$, see Remark \ref{f3}.}\\
The assertion $(a)$ is obvious. Assertion $(b)$ for $k=1$ is a trivial equality. Assume that $(b)$ holds for some $k\in\N$ and all $j\in\N$. We will prove it for $k+1$ and all $j\in\N$. Hence let $j\in\N$. We have
$$
d(x_{H(j,k+1)},y_{H(j,k+1)})=d(f_{\gamma_{H(j,k+1)}}(x_{H(2j-1,k)},x_{H(2j,k)}),f_{\gamma_{H(j,k+1)}}(y_{H(2j-1,k)},y_{H(2j,k)})\leq
$$
$$
\leq\varphi(\max\{d(x_{H(2j-1,k)},y_{H(2j-1,k)}),d(x_{H(2j,k)},y_{H(2j,k)})\})\leq
$$
$$
\leq \varphi(\max\{\max\{\varphi^{(k-1)}(d(x_{H(2^{k-1}(2j-1)-2^{k-1}+i,1)},y_{H(2^{k-1}(2j-1)-2^{k-1}+i,1)}):\;1\leq i\leq 2^{k-1}\},$$ $$\max\{\varphi^{(k-1)}(d(x_{H(2^{k-1}\cdot 2j-2^{k-1}+i,1)},y_{H(2^{k-1}\cdot 2j-2^{k-1}+i,1)}):\;1\leq i\leq 2^{k-1}\})=
$$
$$
=\max\{\varphi^{(k)}(d(x_{H(2^{k}j-2^k+i,1)},y_{H(2^{k}j-2^k+i,1)}):\;1\leq i\leq 2^{k}\}
$$
where inequality and equality follows from the fact that $\varphi$ is nondecreasing. Hence we get $(b)$. Now we prove $(c)$. If $j=2^p+s$ for $s=1,2$, then the assertion follows from definition (as $x_{H(2^p+s,1)}=f_{\gamma_{H(2^p+s,1)}}(x_{H(1,p+1)},x_{H(1,p+1)})$, and similarly $y_{H(2^p+s,1)}$ is defined). Now assume that $(c)$ holds for a fixed $p\in\N$ and $j=2^p+1,2^p+2,...,2l-1,2l$, where $2^p<2l<2^{p+1}$. Then for $s=1,2$ and proper $(j,k)$, we have by definition that $x_{H(2l+s,1)}=f_{\gamma_{H(2l+s,1)}}(x_{H(j,k)},x_{H(j,k)})$, and similarly $y_{H(2l+s,1)}$ is defined. Then by $(b)$, we have
$$
d(x_{H(2l+s,1)},y_{H(2l+s,1)})\leq\varphi\left(d(x_{H(j,k)},y_{H(j,k)}\right)\leq$$ $$\leq \varphi(\max\{\varphi^{(k-1)}(d(x_{H(2^{k-1}j-2^{k-1}+i,1)},y_{H(2^{k-1}j-2^{k-1}+i,1)}):\;1\leq i\leq 2^{k-1}\})\leq $$ $$\leq\varphi(x_{H(1,{p+1})},y_{H(1,{p+1})})
$$
as, by the construction and inductive assumption, each $$d(x_{H(2^{k-1}j-2^{k-1}+i,1)},y_{H(2^{k-1}j-2^{k-1}+i,1)})\leq \varphi(d(x_{H(1,p+1)},y_{H(1,p+1)}).$$ This gives us $(c)$.\\
We are ready to show that $d(x_i,y_i)\to 0$. We will use that fact that a sequence is convergent iff any of its subsequences has a convergent (to the same limit) subsequence.\\
Hence let $(r_i)$ be any subsequence of naturals, and for any $i\in\N$, let $(j_i,k_i)$ be such that $x_{r_i}=x_{H(j_i,k_i)}$ and $y_i=y_{H(j_i,k_i)}$. Consider two cases:\\
Case1: $\sup\{k_i:i\in\N\}=\infty$. Then switching to an appropriate subsequence, we can assume that $k_i\to\infty$ and hence, by $(a)$ and $(b)$, we have
$$
d(x_{r_i},y_{r_i})\leq\varphi^{(k_i-1)}(d(x_0,y_0))\to 0.
$$
Case2: $\sup\{k_i:i\in\N\}<\infty$. Then, switching to an appropriate subsequence, we can assume that $k_i=k$ for all $i\in\N$ and some $k\in\N$, and, consequently, $j_i\to\infty$. Then also $p_i\to\infty$, where $p_i$ is such that $2^{p_i}<j_i\leq 2^{p_i+1}$. Therefore, combining $(a)$, $(b)$ and $(c)$, we get
$$
d(x_{r_i},y_{r_i})\leq\varphi(x_{H(1,{p_i+1})},y_{H(1,{p_i+1})})\leq \varphi(\varphi^{(p_i)}(d(x_0,y_0)))=\varphi^{(p_i+1)}(d(x_0,y_0))\to 0
$$
All in all, $d(x_i,y_i)\to\infty$ and the proof is finished.
\end{proof}

Now we give a pseudcode of chaos game for GIFSs. It will not be as simple as for IFSs, since some points have to be remembered for a $"$long$"$ time (for example, to get $x_{14}$, we have to remembered $x_{10}$ and $x_{13}$, and to get $x_{15}$, we have to have $x_{7}$ and and $x_{14}$). Fortunately, it can be constructed so that only $m$ lists with reasonable and the same lengths will have to be remembered.
\begin{center}\textbf{Pseudocode for chaos game algorithm for GIFSs}\end{center}
Initially chosen point: $x_0\in X$.\\
Initially defined objects: constants: $m,n$, mappings: $f_1,...,f_n$, variables: $i,j,k$, list $x$ of the length $2$, lists: $z_1,..,z_m$.\\
Initial values: $j:=1$, $k:=1$, $z_1[0]:=x_0$,..., $z_m[0]:=x_0$.\\
First chosen point: choose randomly $\gamma\in\{1,...,n\}$

$\;\;\;\;\;\;\;\;\;\;\;\;\;\;\;\;\;\;\;\;\;\;\;\;\;\;$ $x[j,k]:=f_\gamma(z_1[0],...,z_m[0])$

$\;\;\;\;\;\;\;\;\;\;\;\;\;\;\;\;\;\;\;\;\;\;\;\;\;\;$ $z_1[1]:=x[j,k]$

$\;\;\;\;\;\;\;\;\;\;\;\;\;\;\;\;\;\;\;\;\;\;\;\;\;\;$ print $x[j,k]$.\newline\\
Main loop: $\;$ choose randomly $\gamma\in\{1,...,n\}$

$\;\;\;\;\;\;\;\;\;\;\;\;\;\;\;\;$ if $\;j\on{mod}m\neq 0$, then

$\;\;\;\;\;\;\;\;\;\;\;\;\;\;\;\;\;\;\;\;\;\;\;\;\;\;\;\;j:=m^{k-1}\cdot j+1$

$\;\;\;\;\;\;\;\;\;\;\;\;\;\;\;\;\;\;\;\;\;\;\;\;\;\;\;\;k:=1$

$\;\;\;\;\;\;\;\;\;\;\;\;\;\;\;\;\;\;\;\;\;\;\;\;\;\;\;\;x[j,k]:=f_\gamma(z_1[k-1],...,z_m[k-1])$

$\;\;\;\;\;\;\;\;\;\;\;\;\;\;\;\;\;\;\;\;\;\;\;\;\;\;\;\;$if $j\operatorname{mod}m=0$, then $i:=m$, else $i:=j\on{mod}m$

$\;\;\;\;\;\;\;\;\;\;\;\;\;\;\;\;\;\;\;\;\;\;\;\;\;\;\;\;z_i[k]:=x[j,k]$

$\;\;\;\;\;\;\;\;\;\;\;\;\;\;\;\;$ else

$\;\;\;\;\;\;\;\;\;\;\;\;\;\;\;\;\;\;\;\;\;\;\;\;\;\;\;\;j:=\frac{j}{m}$

$\;\;\;\;\;\;\;\;\;\;\;\;\;\;\;\;\;\;\;\;\;\;\;\;\;\;\;\;k:=k+1$

$\;\;\;\;\;\;\;\;\;\;\;\;\;\;\;\;\;\;\;\;\;\;\;\;\;\;\;\;x[j,k]:=f_\gamma(z_1[k-1],...,z_m[k-1])$

$\;\;\;\;\;\;\;\;\;\;\;\;\;\;\;\;\;\;\;\;\;\;\;\;\;\;\;\;$if $j\operatorname{mod}m=0$, then $i:=m$, else $i:=j\on{mod}m$

$\;\;\;\;\;\;\;\;\;\;\;\;\;\;\;\;\;\;\;\;\;\;\;\;\;\;\;\;z_i[k]:=x[j,k]$

$\;\;\;\;\;\;\;\;\;\;\;\;\;\;\;\;\;\;\;\;\;\;\;\;\;\;\;$ if $j\on{mod}m\neq 0$, then $z_1[0]:=x[j,k],$..., $z_m[0]:=x[j,k]$





$\;\;\;\;\;\;\;\;\;\;\;\;\;\;\;\;$ print $x[j,k]$.\\

Now we present examples:
\begin{example}\emph{
Consider the GIFSs $\F$ from Example \ref{e1}, and $\mH=\{h_1,h_2,h_3\}$, where for any $x=(x_1,x_2),y=(y_1,y_2)\in\R^2$, we have \\$\;$\\
$
h_1(x,y):=(0,25x_1+0,2y_2\;\mbox{\textbf{;}}\;0,25x_2+0,2y_2)
$\\
$
h_2(x,y):=(0,25x_1+0,2y_1\;\mbox{\textbf{;}}\;0,25x_2+0,1y_2+0,5)
$\\
$
h_3(x,y):=(0,25x_1+0,1y_1+0,5\;\mbox{\textbf{;}}\;0,25x_2+0,2y_2)
$\\$\;$\\ 
Using chaos game for GIFSs we get the following images
}
\begin{center}
\includegraphics{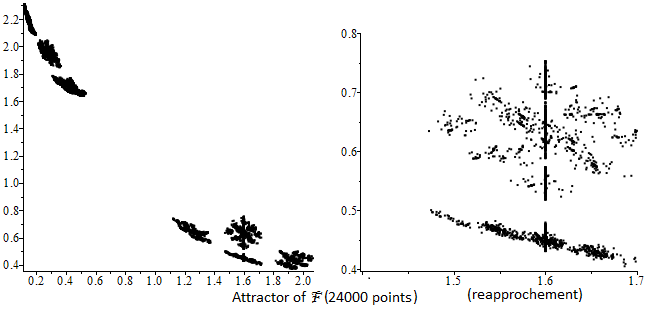}
\end{center}
\begin{center}
\includegraphics{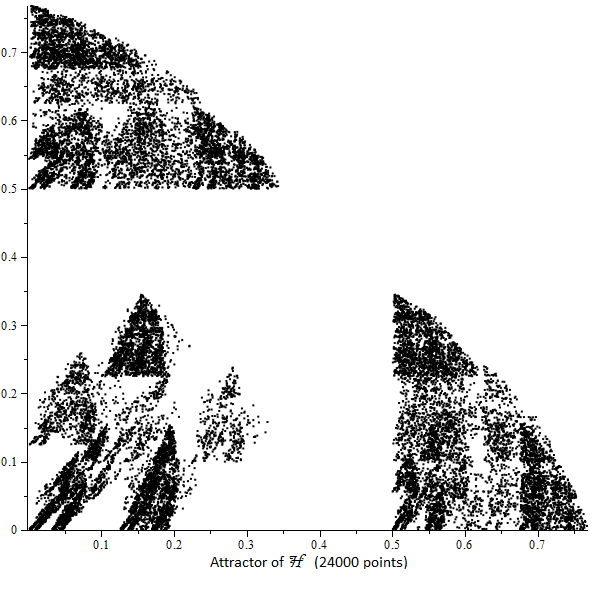}
\end{center}
\end{example}
\begin{remark}\emph{
Let us remark that lengths of lists $z_1,...,z_m$ are really reasonable - if we want to choose $\sum_{i=0}^{k-1}m^i=\frac{1-m^k}{1-m}$ points, then lengths of $z_1,...,z_m$ should be not less then $k$. Also, in this case, we need at most $km$ points to be remembered to proceed all calculations.
 Hence the above algorithm is indeed quite efficient.}
\end{remark}
\begin{remark}\emph{
It is easy to see that the algorithm gives us exactly the sequence $(x_i)$ (because $x[j,k]=x_{H(j,k)}$, and we get points in natural order $x_1,x_2,x_3,...$), with randomly chosen values $(\gamma_i)$.\\
We want also point out that the presented algorithm draw all points from the sequence $(x_i)$. However, it is obvious how to modify it in order to get the sequence $(x_i)$ without some of first of its points - such a change may be important if we start with $x_0$ which does not belong to $A_\F$. On the other hand, fixed points of mappings $f$ from a GIFS $\F$ belong to $A_\F$, so they are natural candidates for starting points.}
\end{remark}
\begin{remark}\emph{
There might be an attempt to define chaos game for GIFSs in alternative, even more similar to classical chaos game, way. Namely, for a given sequence $(\gamma_i)$, define the sequence $(x_i)$ in the following procedure:\\
 $x_0,...,x_{m-1}$ be chosen points from $X$, and $x_{k+m}:=f_{\gamma_{k+m}}(x_k,...,x_{k+m-1})$ for $k\geq 0$. \\
Is true that (with a probability $1$), sets $\{x_i:i\geq k\}$ are closer and closer to the attractor $A_\F$?\\
Unfortunately this does not work:\\
Let $x_0,x_1$ be any points from the attractor $A_\F$. Then (assuming the case $m=2$)
$$
x_6=f_{\gamma_6}(x_4,x_5)=f_{\gamma_6}\left(f_{\gamma_4}(x_2,x_3),f_{\gamma_5}(x_3,x_4)\right)=$$ $$=f_{\gamma_6}\left(f_{\gamma_4}\left(f_{\gamma_2}(x_0,x_1),f_{\gamma_3}(x_1,x_2)\right),f_{\gamma_5}\left(f_{\gamma_3}(x_1,x_2),f_{\gamma_4}(x_2,x_3)\right)\right)\in A_{(\gamma_6,(\gamma_4,\gamma_5),((\gamma_2,\gamma_3),(\gamma_3,\gamma_4)))}
$$
so some coefficients are the same. In particular, $x_6$ cannot belong to $A_\beta$ for $\beta:=(1,(1,1),((1,2),(1,2)))$. As $x_0,x_1$ were taken arbitrarily, this shows that for any $i\geq 6$, $x_i\notin A_\beta$, provided $A_\beta$ is disjoint with other sets $A_\alpha$, $\alpha\in\;_3\Omega$.\\
In the Cantor set defined in \cite{S}, the sets $A_\alpha$, $\alpha\in\;_3\Omega$ are pairwise disjoint, even in a stronger sense that for every $\alpha\in \;_3\Omega$, there is an open set $U_\alpha\subset \R^2$ such that $A_\alpha\subset U_\alpha$ and $U_\alpha's$ are pairwise disjoint (as it is standard construction of Cantor-like set; of course, the same holds for any given $k$, not necessarily $k=3$). Therefore, any set $\overline{\{x_i:i\geq 6\}}\cap A_\beta=\emptyset$ and, in particular, $\overline{\{x_i:i\geq 0\}}\neq\bigcup_{\alpha\in\;_3\Omega}A_\alpha=A_\F$ (see Lemma \ref{l1}).\\
In fact, we could take many other, much longer $\beta's$ - it seems that the set $\overline{\{x_n:n\geq 0\}}$ touches just selected sets $A_\alpha$.}
\end{remark}\label{e2}

\section{Affine GIFSs}
Observe that, having some GIFS $\F$, by Lemma \ref{filip4}, for every closed and bounded set $D$, sets $\bigcup_{\alpha\in\;_k\Omega}f_\alpha(D_k)$, $k\in\N$, are closer and closer to the attractor $A_\F$. Hence if we have a nice formula for mappings $f_\alpha$, $\alpha\in\Omega_<$, then we could easily get a good approximation of the attractor $A_\F$. In this section we will show that it can be done for a natural class of GIFSs.\\
We say that a GIFS $\F=\{f_1,...,f_n\}$ on an Euclidean space $\R^d$ is \emph{affine}, if each $f_i$ is an affine mapping, i.e., it is of the form:
\begin{equation}\label{aff}
f_i(x_1,...,x_m):=A^i_1\cdot x_1+...+A^i_m\cdot x_m+B^i,\;\;\;x_1,...,x_m\in\R^d
\end{equation}
where $A^i_1,...,A^i_m,B^i$ are some real square matrices of dimension $d$, and $\cdot$ and $+$ are standard multiplication and addition in space of matrices.\\
It turns out that in this case, mappings $f_\alpha$, $\alpha\in\Omega_<$ have natural descriptions.
Before we introduce it, let us give some further denotation.\\
If $X$ is a metric space, then let $X_1,X_2,...$ have a meaning as in (\ref{filip1}) and (\ref{filip2}).\\
If $x\in X_1=X\times...\times X$, then let $x_{(1)},...,x_{(m)}\in X$ be such that $x=(x_{(1)},...,x_{(m)})$.\\
Assume that for some $k\in\N$ and all $x\in X_k$, we defined $x_{(\epsilon_1,...,\epsilon_k)}\in X$, for $(\epsilon_1,...,\epsilon_k)\in\{1,...,m\}^k$.\\
If $x=(x_1,...,x_m)\in X_k\times...\times X_k=X_{k+1}$, then for every $(\epsilon_1,...,\epsilon_{k+1})\in\{1,...,m\}^{k+1}$, define $$x_{(\epsilon_1,...,\epsilon_{k+1})}:=(x_{\epsilon_1})_{(\epsilon_2,...,\epsilon_{k+1})}$$
All in all, for any $k\in\N$, any $x\in X_k$ and any $(\epsilon_1,...,\epsilon_k)\in\{1,...,m\}^k$, we defined $x_{(\epsilon_1,...,\epsilon_k)}\in X$.\\
In fact, we can give a bit less formal, but more natural description of elements $x_{(\epsilon_1,...,\epsilon_k)}$:\\
Observe that for every $k\in\N$, there is a natural bijection between $X_k$ and $X^{m^k}$, which adjust to any sequence $x\in X_k$, the sequence $\tilde{x}\in X^{m^k}$ which is created from $x$ by erasing all but the first and the last brackets.\\
\emph{For example, if $x=(((1,2),(3,1)),((2,1),(1,4)))$, then $\tilde{x}=(1,2,3,1,2,1,1,4)$.}\\
Now, we can enumerate elements of $\tilde{x}$ by using $m$-ary system, from the left to the right, but by using values $\{1,...,m\}$, instead of $\{0,...,m-1\}$. Then $x_{(\epsilon_1,...,\epsilon_k)}$ is exactly the $\epsilon_1,...,\epsilon_k$ term in a sequence $\tilde{x}$.\\
\emph{For example, $x=(((x_{(1,1,1)},x_{(1,1,2)}),(x_{(1,2,1)},x_{(1,2,2)})),((x_{(2,1,1)},x_{(2,1,2)}),(x_{2,2,1},x_{2,2,2})))$}.\\
In a similar way, for every $k\geq 2$ and $\alpha\in\;\Omega_k$, we define elements $\alpha_{(\epsilon_1,...,\epsilon_{k-1})}$.\\
\emph{For example, $\alpha=((\alpha_{(1,1)},\alpha_{(1,2)}),(\alpha_{(2,1)},\alpha_{(2,2)}))$.}\\
We are ready to state the theorem (recall that if $\alpha\in\;_k\Omega$, then $\alpha=(\alpha^1,...,\alpha^k)$ and $\alpha^i\in\Omega_i$ for $i=1,...,k$ - see denotations from the first section).

\begin{theorem}\label{it2}
Let $\F=\{f_1,...,f_n\}$ be an affine GIFS which consists of functions of the form (\ref{aff}).
For every $k\in\N$, $\alpha\in {_k\Omega}$ and $x\in X_k$, 
we have (all $\epsilon's$ below ranges from $1$ to $m$)
\begin{equation}\label{aff1}
f_{\alpha}(x) = 
\sum_{\epsilon_1, \epsilon_2, ..., \epsilon_k } A^{\alpha ^1}_{\epsilon_1} \cdot A^{\alpha^2_{(\epsilon_1)}}_{\epsilon_2} \cdot ... \cdot A^{\alpha^k_{(\epsilon_1, \epsilon_2, ..., \epsilon_{k-1})}}_{\epsilon_k} \cdot x_{(\epsilon_1, \epsilon_2, ..., \epsilon_k)} + {B^{\alpha}}  
\end{equation}
where
\begin{equation}\label{aff2}
{B^{\alpha}} := B^{\alpha^1} + \sum_{\epsilon_1} A^{\alpha^1}_{\epsilon_1}\cdot B^{\alpha^2_{(\epsilon_1)}} + ... + 
\sum_{\epsilon_1, \epsilon_2, ..., \epsilon_{k-1}} A^{\alpha ^1}_{\epsilon_1} \cdot A^{\alpha^2_{(\epsilon_1)}}_{\epsilon_2} \cdot ... \cdot A^{\alpha^{k-1}_{(\epsilon_1, \epsilon_2, ..., \epsilon_{k-2})}}_{\epsilon_{k-1}} \cdot B^{\alpha^k_{(\epsilon_1, \epsilon_2, ..., \epsilon_{k-1})}}
\end{equation}
\end{theorem}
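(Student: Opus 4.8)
The plan is to prove (\ref{aff1}) and (\ref{aff2}) simultaneously by induction on $k$, driven by the recursive definition of the families $\F^k$. The base case $k=1$ is immediate: here $\alpha=\alpha^1\in\Omega_1$ and $f_\alpha$ is literally the affine map (\ref{aff}), so that $f_\alpha(x)=\sum_{\epsilon_1}A^{\alpha^1}_{\epsilon_1}\cdot x_{(\epsilon_1)}+B^{\alpha^1}$, which is (\ref{aff1}) with $B^\alpha=B^{\alpha^1}$, matching (\ref{aff2}) (where only the first summand is present).

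For the inductive step I would fix $\alpha\in{}_{k+1}\Omega$ and $x=(x_1,\dots,x_m)\in X_{k+1}$ with each $x_{\epsilon_1}\in X_k$, and expand, using the definition of $\F^{k+1}$ together with the affinity of $f_{\alpha^1}$,
$$f_\alpha(x)=f_{\alpha^1}\big(f_{\alpha(1)}(x_1),\dots,f_{\alpha(m)}(x_m)\big)=\sum_{\epsilon_1=1}^m A^{\alpha^1}_{\epsilon_1}\cdot f_{\alpha(\epsilon_1)}(x_{\epsilon_1})+B^{\alpha^1}.$$
Since each $\alpha(\epsilon_1)\in{}_k\Omega$ and $x_{\epsilon_1}\in X_k$, the inductive hypothesis applies to every inner term $f_{\alpha(\epsilon_1)}(x_{\epsilon_1})$.

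The substance of the argument is then the bookkeeping of the multi-indices under the shift $\alpha\mapsto\alpha(\epsilon_1)$. The two facts I would isolate and read off directly from the definitions are
$$\big(\alpha(\epsilon_1)\big)^j_{(\epsilon_2,\dots,\epsilon_j)}=\alpha^{j+1}_{(\epsilon_1,\epsilon_2,\dots,\epsilon_j)}\qquad\mbox{and}\qquad(x_{\epsilon_1})_{(\epsilon_2,\dots,\epsilon_{k+1})}=x_{(\epsilon_1,\epsilon_2,\dots,\epsilon_{k+1})}.$$
The first holds because the leading subscript selects the $\epsilon_1$-th block of $\alpha^{j+1}\in\Omega_{j+1}$, which is exactly the $j$-th coordinate of $\alpha(\epsilon_1)$; the second is the recursive definition of the coordinates of $x$. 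After plugging the inductive form of each $f_{\alpha(\epsilon_1)}(x_{\epsilon_1})$ into the display above and uniformly relabeling its summation variables $\epsilon_1,\dots,\epsilon_k$ as $\epsilon_2,\dots,\epsilon_{k+1}$, these identities let the linear part coalesce into the single sum of (\ref{aff1}) for $k+1$, with the product of the $k+1$ matrices $A^{\alpha^1}_{\epsilon_1}\cdot A^{\alpha^2_{(\epsilon_1)}}_{\epsilon_2}\cdots A^{\alpha^{k+1}_{(\epsilon_1,\dots,\epsilon_k)}}_{\epsilon_{k+1}}$ hitting $x_{(\epsilon_1,\dots,\epsilon_{k+1})}$. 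The leftover terms assemble into $B^\alpha=B^{\alpha^1}+\sum_{\epsilon_1}A^{\alpha^1}_{\epsilon_1}\cdot B^{\alpha(\epsilon_1)}$, and inserting the inductive expression (\ref{aff2}) for $B^{\alpha(\epsilon_1)}$, again with an index shift, reproduces (\ref{aff2}) for $k+1$.

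I expect the only genuine obstacle to be precisely this index management: getting the uniform shift $\epsilon_j\mapsto\epsilon_{j+1}$ right and verifying that the superscripts $\alpha^{j+1}_{(\epsilon_1,\dots,\epsilon_j)}$ appearing on the matrices and translations are exactly the ones produced by $\alpha\mapsto\alpha(\epsilon_1)$, that is, the first identity above. Once that is pinned down, matching the linear and constant parts of (\ref{aff1}) and (\ref{aff2}) term by term is entirely mechanical, so no conceptually deep step remains.
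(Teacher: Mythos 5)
Your proposal is correct and follows essentially the same route as the paper: induction on $k$, expanding $f_\alpha(x)=\sum_{\epsilon_1}A^{\alpha^1}_{\epsilon_1}\cdot f_{\alpha(\epsilon_1)}(x_{\epsilon_1})+B^{\alpha^1}$ via affinity, applying the inductive hypothesis to each inner term, and using the index-shift identities to relabel and coalesce the sums. Your two isolated bookkeeping facts are exactly the observations the paper records (there split as $\alpha(i)^s=\alpha^{s+1}_i$ and $(\alpha^s_i)_{(\epsilon_1,\dots,\epsilon_{s-2})}=\alpha^s_{(i,\epsilon_1,\dots,\epsilon_{s-2})}$), and your recursion $B^\alpha=B^{\alpha^1}+\sum_{\epsilon_1}A^{\alpha^1}_{\epsilon_1}\cdot B^{\alpha(\epsilon_1)}$ is the paper's identity (\ref{fifi1}).
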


\begin{proof}
The proof will follow by induction.

For $k=1$ let us take some $\alpha \in \{1, 2, ..., n\}$ and $x=(x_1, x_2, ..., x_m) \in X^m$. We have:
$$
f_{\alpha}(x) = A^{\alpha}_1 \cdot x_1+...+A^{\alpha}_m \cdot x_m+B^{\alpha} = \sum_{ \epsilon_1} A^{\alpha^1}_{\epsilon_1} \cdot x_{(\epsilon_1)} + B^{\alpha^1}
$$
so (\ref{aff1}) holds for $k=1$.\\
Now assume that it holds for some $k\geq 1$, and let us take some $\alpha=(\alpha^1,...,\alpha^{k},\alpha^{k+1}) \in {_{k+1}\Omega}$ and $x=(x_1,...,x_m)\in X_{k}\times...\times X_k=X_{k+1}$. We have (see (\ref{f1}) and other denotation from the first section).

$$f_{{\alpha}} (x) = f_{{\alpha}} (x_1,...,x_m) =
 f_{\alpha^1}\left(f_{{\alpha}(1)}(x_1),f_{{\alpha}(2)}(x_2),..., f_{{\alpha}(m)}(x_m)\right) = 
$$
$$
= A^{\alpha^1}_1 \cdot f_{{\alpha}(1)}(x_1) +A^{\alpha^1}_2 \cdot f_{{\alpha}(2)}(x_2) + ... +A^{\alpha^1}_m \cdot f_{{\alpha}(m)}(x_m) + B^{\alpha^1} = [\mbox{by definition and inductive assumption}]=
$$
$$
= A^{\alpha^1}_1 \cdot \left(\sum_{\epsilon_1, \epsilon_2, ..., \epsilon_k} A^{{\alpha}(1)^1}_{\epsilon_1} \cdot A^{{\alpha}(1)^2_{(\epsilon_1)}}_{\epsilon_2} \cdot ... \cdot A^{{\alpha}(1)^k_{(\epsilon_1, \epsilon_2, ..., \epsilon_{k-1})}}_{\epsilon_k} \cdot x_{(1, \epsilon_1, \epsilon_2, ..., \epsilon_k)} + {B^{{\alpha}{(1)}}}\right) + 
$$
$$
\ \ \ \ \ +A^{\alpha^1}_2 \cdot \left(\sum_{\epsilon_1, \epsilon_2, ..., \epsilon_k} A^{{\alpha}(2)^1}_{\epsilon_1} \cdot A^{{\alpha}(2)^2_{(\epsilon_1)}}_{\epsilon_2} \cdot ... \cdot A^{{\alpha}(2)^k_{(\epsilon_1, \epsilon_2, ..., \epsilon_{k-1})}}_{\epsilon_k} \cdot x_{(2, \epsilon_1, \epsilon_2, ..., \epsilon_k)} +{B^{{\alpha}{(2)}}}\right) + ... + 
$$
$$
\ \ \ \ \ \ \ \ \ \ \ \ + A^{\alpha^1}_m \cdot \left(\sum_{\epsilon_1, \epsilon_2, ..., \epsilon_k} A^{{\alpha}(m)^1}_{\epsilon_1} \cdot A^{{\alpha}(m)^2_{(\epsilon_1)}}_{\epsilon_2} \cdot ... \cdot A^{{\alpha}(m)^k_{(\epsilon_1, \epsilon_2, ..., \epsilon_{k-1})}}_{\epsilon_k} \cdot x_{(m, \epsilon_1, \epsilon_2, ..., \epsilon_k)} + {B^{{\alpha}{(m)}}}\right) + B^{\alpha^1} = \bigotimes
$$
Now, we should observe that for any $i=1, 2, ..., m$ and any $s=1,...,k$, $\alpha(i)^s = \alpha^{s+1}_i$, provided $\alpha=(\alpha^1,(\alpha^2_1,...,\alpha^2_m),...,(\alpha^{k+1}_1,...,\alpha^{k+1}_m))$. With this observation we come to:
$$
\bigotimes= A^{\alpha^1}_1 \cdot \left(\sum_{\epsilon_1, \epsilon_2, ..., \epsilon_k} A^{{\alpha}^2_1}_{\epsilon_1} \cdot A^{({\alpha}^3_1)_{(\epsilon_1)}}_{\epsilon_2} \cdot ... \cdot A^{({\alpha}^{k+1}_1)_{(\epsilon_1, \epsilon_2, ..., \epsilon_{k-1})}}_{\epsilon_n} \cdot x_{(1, \epsilon_1, \epsilon_2, ..., \epsilon_k)} + {B^{{\alpha}{(1)}}}\right) + 
$$
$$
\ \ \ \ \ \ \ A^{\alpha^1}_2 \cdot \left(\sum_{\epsilon_1, \epsilon_2, ..., \epsilon_k} A^{{\alpha}^2_2}_{\epsilon_1} \cdot A^{({\alpha}^3_2)_{(\epsilon_1)}}_{\epsilon_2} \cdot ... \cdot A^{({\alpha}^{k+1}_2)_{(\epsilon_1, \epsilon_2, ..., \epsilon_{k-1})}}_{\epsilon_n} \cdot x_{(2, \epsilon_1, \epsilon_2, ..., \epsilon_k)} + {B^{{\alpha}{(2)}}}\right) + ... + 
$$
$$
\ \ \ \ \ \ \ \ \ \ \ \ \ A^{\alpha^1}_m \cdot \left(\sum_{\epsilon_1, \epsilon_2, ..., \epsilon_k} A^{{\alpha}^2_m}_{\epsilon_1} \cdot A^{({\alpha}^3_m)_{(\epsilon_1)}}_{\epsilon_2} \cdot ... \cdot A^{({\alpha}^{k+1}_m)_{(\epsilon_1, \epsilon_2, ..., \epsilon_{k-1})}}_{\epsilon_n} \cdot x_{(m, \epsilon_1, \epsilon_2, ..., \epsilon_k)} + {B^{{\alpha}{(m)}}}\right) + B^{\alpha^1}= \bigotimes
$$
Now observe that for any $i=1,...,m$ and $s=3,...,k+1$, and every $\epsilon_1,...,\epsilon_{s-2}$, we have $(\alpha^s_i)_{(\epsilon_1,...,\epsilon_{s-2})}=\alpha^s_{(i,\epsilon_1,...,\epsilon_{s-2})}$, and, similarly, $\alpha^2_i=\alpha^2_{(i)}$. Hence
$$
\bigotimes= A^{\alpha^1}_1 \cdot \left(\sum_{\epsilon_1, \epsilon_2, ..., \epsilon_k} A^{{\alpha}^2_{(1)}}_{\epsilon_1} \cdot A^{{\alpha}^3_{(1, \epsilon_1)}}_{\epsilon_2} \cdot ... \cdot A^{{\alpha}^{k+1}_{(1, \epsilon_1, \epsilon_2, ..., \epsilon_{k-1})}}_{\epsilon_k} \cdot x_{(1, \epsilon_1, \epsilon_2, ..., \epsilon_k)} + {B^{{\alpha}{(1)}}}\right) + 
$$
$$
\ \ \ \ \ \ \ \ A^{\alpha^1}_2 \cdot \left(\sum_{\epsilon_1, \epsilon_2, ..., \epsilon_k} A^{{\alpha}^2_{(2)}}_{\epsilon_1} \cdot A^{{\alpha}^3_{(2, \epsilon_1)}}_{\epsilon_2} \cdot ... \cdot A^{{\alpha}^{k+1}_{(2, \epsilon_1, \epsilon_2, ..., \epsilon_{k-1})}}_{\epsilon_k} \cdot x_{(2, \epsilon_1, \epsilon_2, ..., \epsilon_k)} + {B^{{\alpha}{(2)}}}\right) + ... + 
$$
$$
\ \ \ \ \ \ \ \ \ \ \ \ \ \  A^{\alpha^1}_m \cdot \left(\sum_{\epsilon_1, \epsilon_2, ..., \epsilon_k} A^{{\alpha}^2_{(m)}}_{\epsilon_1} \cdot A^{{\alpha}^3_{(m, \epsilon_1)}}_{\epsilon_2} \cdot ... \cdot A^{{\alpha}^{k+1}_{(m, \epsilon_1, \epsilon_2, ..., \epsilon_{k-1})}}_{\epsilon_k} \cdot x_{(m, \epsilon_1, \epsilon_2, ..., \epsilon_k)} + {B^{{\alpha}{(m)}}}\right) + B^{\alpha^1} =\bigotimes
$$
Now, remunerating $\epsilon's$ and rearranging coefficients, we get
$$
\bigotimes= \sum_{\epsilon_1, \epsilon_2, \epsilon_3, ..., \epsilon_{k+1}} A^{\alpha^1}_{\epsilon_1} \cdot A^{{\alpha}^2_{(\epsilon_1)}}_{\epsilon_2} \cdot A^{{\alpha}^3_{(\epsilon_1, \epsilon_2)}}_{\epsilon_3} \cdot ... \cdot A^{{\alpha}^k_{(\epsilon_1, \epsilon_2, \epsilon_3, ..., \epsilon_k)}}_{\epsilon_{k+1}} \cdot x_{(\epsilon_1, \epsilon_2, \epsilon_3, ..., \epsilon_{k+1})}+
$$
$$
+ A^{\alpha^1}_1 \cdot {B^{{\alpha}{(1)}}} + A^{\alpha^1}_2 \cdot {B^{{\alpha}{(2)}}}+...+ A^{\alpha^1}_m \cdot {B^{{\alpha}{(m)}}}+B^{\alpha^1}
$$
Hence it remains to show that 
\begin{equation}\label{fifi1}
B^\alpha=A^{\alpha^1}_1 \cdot {B^{{\alpha}{(1)}}} + A^{\alpha^1}_2 \cdot {B^{{\alpha}{(2)}}}+...+ A^{\alpha^1}_m \cdot {B^{{\alpha}{(m)}}}+B^{\alpha^1}
\end{equation}
We have (we will use the same tricks as earlier without mentioning them)
$$
A^{\alpha^1}_1 \cdot {B^{{\alpha}{(1)}}} + A^{\alpha^1}_2 \cdot {B^{{\alpha}{(2)}}}+...+ A^{\alpha^1}_m \cdot {B^{{\alpha}{(m)}}}+B^{\alpha^1}= \sum_{\epsilon_1} A^{\alpha^1}_{\epsilon_1} \cdot {B^{{\alpha}(\epsilon_1)}} + B^{\alpha^1} = [\mbox{by inductive assumption}]=
$$
$$
=B^{\alpha^1}+ \sum_{\epsilon_1} A^{\alpha^1}_{\epsilon_1} \cdot \left(B^{{\alpha}(\epsilon_1)^1} + \sum_{\epsilon_2} A^{{\alpha}(\epsilon_1)^1}_{\epsilon_2} \cdot B^{{\alpha}(\epsilon_1)^2_{(\epsilon_2)}} + ...
   + \sum_{\epsilon_2, ..., \epsilon_k}\cdot A^{{\alpha}(\epsilon_1)^1}_{\epsilon_2} \cdot ... \cdot A^{{\alpha}(\epsilon_1)^{k-1}_{(\epsilon_2, ..., \epsilon_{k-1})}}_{\epsilon_k} \cdot B^{{\alpha}(\epsilon_1)^k_{(\epsilon_2, ..., \epsilon_k)}}\right) = 
$$
$$
= B^{\alpha^1}+ \sum_{\epsilon_1} A^{\alpha^1}_{\epsilon_1} \cdot \left(B^{\alpha^2_{\epsilon_1}} + \sum_{\epsilon_2} A^{\alpha^2_{\epsilon_1}}_{\epsilon_2} \cdot B^{(\alpha^3_{\epsilon_1})_{(\epsilon_2)}} + ... + \sum_{\epsilon_2, ..., \epsilon_k} A^{\alpha^2_{\epsilon_1}}_{\epsilon_2} \cdot ... \cdot A^{(\alpha^k_{\epsilon_1})_{(\epsilon_2, ..., \epsilon_{k-1})}}_{\epsilon_k} \cdot B^{(\alpha^{k+1}_{\epsilon_1})_{(\epsilon_2, ..., \epsilon_k)}}\right) =  
$$
$$
=B^{\alpha^1}+ \sum_{\epsilon_1} A^{\alpha^1}_{\epsilon_1} \cdot B^{\alpha^2_{(\epsilon_1)}} + \sum_{\epsilon_1,\epsilon_2} A^{\alpha^1}_{\epsilon_1} \cdot A^{\alpha^2_{(\epsilon_1)}}_{\epsilon_2} \cdot B^{\alpha^3_{(\epsilon_1, \epsilon_2)}} + ... + \sum_{\epsilon_1, ..., \epsilon_k} A^{\alpha^1}_{\epsilon_1} \cdot A^{\alpha^2_{(\epsilon_1)}}_{\epsilon_2} \cdot ... \cdot A^{\alpha^k_{(\epsilon_1, \epsilon_2, ..., \epsilon_{k-1})}}_{\epsilon_k} \cdot B^{\alpha^{k+1}_{(\epsilon_1, \epsilon_2, ..., \epsilon_k)}}) =  
 {B^{{\alpha}}}.
$$
\end{proof}

Now we are going to present a pseudocode of the algorithm, which will generate the values
\begin{equation}\label{baff3}
A^\alpha_{\epsilon_1,...,\epsilon_k}:=A^{\alpha ^1}_{\epsilon_1} \cdot A^{\alpha^2_{(\epsilon_1)}}_{\epsilon_2} \cdot ... \cdot A^{\alpha^k_{(\epsilon_1, \epsilon_2, ..., \epsilon_{k-1})}}_{\epsilon_k} 
\end{equation}
and
\begin{equation}\label{baff4}
B^\alpha:=B^{\alpha^1} + \sum_{\epsilon_1} A^{\alpha^1}_{\epsilon_1}\cdot B^{\alpha^2_{(\epsilon_1)}} + ... + 
\sum_{\epsilon_1, \epsilon_2, ..., \epsilon_{k-1}} A^{\alpha ^1}_{\epsilon_1} \cdot A^{\alpha^2_{(\epsilon_1)}}_{\epsilon_2} \cdot ... \cdot A^{\alpha^{k-1}_{(\epsilon_1, \epsilon_2, ..., \epsilon_{k-2})}}_{\epsilon_{k-1}} \cdot B^{\alpha^k_{(\epsilon_1, \epsilon_2, ..., \epsilon_{k-1})}}
\end{equation}

As we want to work with numbers rather than quite many lists, we need to define some further denotations and remarks. Assume that $\F=\{f_1,...,f_n\}$ is a GIFS of order $m$.\\
At first observe that for any $k\in\N$, we can adjust to each $\alpha\in\;\Omega_k$ (and $\alpha\in\;_k\Omega$), the sequence $\tilde{\alpha}$ of the length $m^{k-1}$ (and $1+...+m^{k-1}=\frac{1-m^k}{1-m}$ respectively), in a similar way as we did it for $x's$ -earlier, i.e., by erasing all brackets but the first and the last one.\\
\emph{for example, if $\alpha=(1,(2,1),((3,2),(4,1)))$, then $\tilde{\alpha}=(1,2,1,3,2,4,1)$.}\\
Therefore ($\on{card}(\cdot)$ denotes the cardinality of a set)
\begin{equation}\label{afff1}
\on{card}(\Omega_k)=n^{m^{k-1}}\;\;\;\mbox{and}\;\;\;\on{card}(_k\Omega)=n^{\frac{1-m^k}{1-m}}
\end{equation}
Now to any $\alpha\in\;_k\Omega$, we can adjust the number $N(\alpha,k)$ such that the sequence $\tilde{\alpha}$ is its representation in the $n$-ary system, but with the the use of numbers $1,...,n$ instead of $0,...,n-1$. In a similar way we adjust to each $\alpha\in\;\Omega_k$, the number $M(\alpha,k)$.\\
\emph{For example, for $\alpha$ as above (and for $n=4$), we have (note that here $6=\frac{1-2^3}{1-2}-1=\frac{1-m^k}{1-m}-1$) 
$$N(\alpha,3)= 0\cdot 4^6+1\cdot 4^5+0\cdot 4^4+2\cdot 4^3+1\cdot 4^2+3\cdot 4^1+0\cdot 4^0=$$
$$=(1-1)\cdot 4^6+(2-1)\cdot 4^5+(1-1)\cdot 4^4+(3-1)\cdot 4^3+(2-1)\cdot 4^2+(4-1)\cdot 4^1+(1-1)\cdot 4^0$$}
Clearly, the mappings $\alpha\to N(\alpha,k)$ and $\alpha\to M(\alpha,k)$ are bijections of  $_k\Omega$ and $\left\{0,...,n^{\frac{1-m^k}{1-m}}-1\right\}$, and $\Omega_k$ and $\left\{0,...,n^{m^{k-1}}-1\right\}$, respectively.\\
Also, by definition, if $\alpha\in\;_{k-1}\Omega$ and $\gamma\in\Omega_k$, then
\begin{equation}\label{afff2}
N(\alpha\hat\;\gamma,k)=N(\alpha,k-1)\cdot n^{m^{k-1}}+M(\gamma,k),
\end{equation}
where $\alpha\hat\;\gamma$ denotes the extension of $\alpha$ by $\gamma$.
Similarly, to any sequence $\epsilon=(\epsilon_1,...,\epsilon_k)\in\{1,...,m\}^k$, we adjust the number 
\begin{equation}\label{aa1}P(\epsilon,k):=(\epsilon_1-1)\cdot m^{k-1}+...+(\epsilon_{k-1}-1)\cdot m^1+(\epsilon_{k}-1)\cdot m^0
\end{equation}
Clearly, if $\epsilon=(\epsilon_1,...,\epsilon_k)$ and $\epsilon'=(\epsilon_1,...,\epsilon_{k-1})$, then \begin{equation}\label{aa2}P(\epsilon,k)=m\cdot P(\epsilon',k-1)+\epsilon_k-1\end{equation}
Now we show that if $\alpha\in\Omega_k$ and $\epsilon=(\epsilon_1,...,\epsilon_{k-1})\in\{1,...,m\}^{k-1}$, then
\begin{equation}\label{aff3}
\alpha_{(\epsilon_1,...,\epsilon_{k-1})}=\left(\on{floor}\left(\frac{M(\alpha,k)}{n^{m^{k-1}-1-P(\epsilon,k-1)}}\right)\right)\on{mod}n+1
\end{equation}
where $\on{floor}$ is the integer part of a number.\\
To see (\ref{aff3}), observe that $\alpha_{(\epsilon_1,...,\epsilon_{k-1})}$ is exactly the $m^{k-1}-P(\epsilon,k-1)$ term (from the right) in a sequence $\tilde{\alpha}$ (which can be proved by induction with respect to $k$, with the use of (\ref{aa2})), and the formula in (\ref{aff3}) detects this term from $\tilde{\alpha}$ (we need to add $1$ because we use numbers $1,...,m$ instead of $0,...,m-1$).\\
\emph{For example, let $\alpha=(((1,2),(4,3)),((3,2),(1,2)))$ (and $n=4)$, and $\epsilon=(1,2,1)$. Then $\alpha_{(1,2,1)}=4$, and our formula (\ref{aff3}) gives us the same: $M(\alpha,4)=7825$, $P(\epsilon,3)=2$, and $\left(\on{floor}\left(\frac{785}{4^{2^{4-1}-1-2}}\right)\right)\on{mod}n+1=4$.
}\\
Now, for every $\alpha=(\alpha^1,...,\alpha^k)\in\;_k\Omega$ and $\epsilon=(\epsilon_1,...,\epsilon_k)\in\{1,...,m\}^k$, denote
$$
A[k,N(\alpha,k),P(\epsilon,k)]:=A^{\alpha}_{\epsilon_1,...,\epsilon_k}$$
$$B[k,N(\alpha,k)]:=B^\alpha$$
and if $k\geq 2$, then
$$C[k,N(\alpha,k),P(\epsilon',k-1)]:=A^{\alpha'}_{\epsilon_1,...,\epsilon_{k-1}}\cdot B^{\alpha^k_{(\epsilon_1,...,\epsilon_{k-1})}}
$$
where $\alpha'=(\alpha^1,...,\alpha^{k-1})$ and $\epsilon'=(\epsilon_1,...,\epsilon_{k-1})$. Then by (\ref{baff3}), (\ref{baff4}), (\ref{aa1}), (\ref{aa2}) and (\ref{afff2}), we have for $k\geq 2$:
\begin{equation}\label{aa3}
A[k,N(\alpha,k),P(\epsilon,k)]=A\left[k,N(\alpha',k-1)\cdot n^{m^{k-1}}+M(\alpha^k,k),P(\epsilon',k-1)\cdot m+\epsilon_k-1\right]=\end{equation} $$=A[k-1,N(\alpha',k-1),P(\epsilon',k-1)]\cdot A\left[1,\alpha^k_{(\epsilon_1,...,\epsilon_{k-1})}-1,\epsilon_{k}-1\right]
$$
and
\begin{equation}\label{aa4}
C[k,N(\alpha,k),P(\epsilon',k-1)]=A[k-1,N(\alpha',k-1),P(\epsilon',k-1)]\cdot B\left[1,\alpha^k_{(\epsilon_1,...,\epsilon_{k-1})}-1\right]
\end{equation}
and, finally,
\begin{equation}\label{aa5}
B[k,N(\alpha,k)]=B[k-1,N(\alpha',k-1)]+\sum_{\epsilon=(\epsilon_1,...,\epsilon_{k-1})}C[k,N(\alpha,k),P(\epsilon,k-1)]
\end{equation}
We are ready to give a pseudocode of an algorithm which will generate values (which are, in fact, matrices) $A[k,N,P]$, $C[k,N,P]$ and $B[k,N]$, according to the above procedure. Then, having these values, we automatically have the mappings $f_\alpha$ and hence, using Lemma \ref{filip4}, we can easily make images of $A_\F$. Note that in the main loop below we used (\ref{aa3}), (\ref{aa4}) and (\ref{aa5}).
\newpage
\begin{center}\textbf{Pseudocode for affine GIFSs - defining all coefficients}\end{center}
Initially defined objects: constants: $m,n,A^i_j,B^i$, $i=1,...,n$, $j=1,...,m$, variables: $N,M,P,I,k$, matrices: $A,B,C$.\\

First defined values:

$\;\;\;\;\;\;\;\;\;\;\;\;\;\;\;\;\;\;\;\;\;$ $k:=1$

$\;\;\;\;\;\;\;\;\;\;\;\;\;\;\;\;\;\;\;\;\;$ For $N$ from $0$ to $n-1$

$\;\;\;\;\;\;\;\;\;\;\;\;\;\;\;\;\;\;\;\;\;\;\;\;\;\;\;$ $B[k,N]:=B^{N+1}$

$\;\;\;\;\;\;\;\;\;\;\;\;\;\;\;\;\;\;\;\;\;\;\;\;\;\;\;$ For $P$ from $0$ to $m-1$

$\;\;\;\;\;\;\;\;\;\;\;\;\;\;\;\;\;\;\;\;\;\;\;\;\;\;\;\;\;\;\;\;$ $A[k,N,P]:=A^{N+1}_{P+1}$

Main loop

$\;\;\;\;\;\;\;\;\;\;\;\;\;\;\;\;\;\;\;\;\;$ $k:=k+1$

$\;\;\;\;\;\;\;\;\;\;\;\;\;\;\;\;\;\;\;\;\;$ For $N$ from $0$ to $n^{\frac{m^{k-1}-1}{m-1}}-1$

$\;\;\;\;\;\;\;\;\;\;\;\;\;\;\;\;\;\;\;\;\;\;\;\;\;\;\;$ For $M$ from $0$ to $n^{m^{k-1}}-1$

$\;\;\;\;\;\;\;\;\;\;\;\;\;\;\;\;\;\;\;\;\;\;\;\;\;\;\;\;\;\;\;\;$ $B\left[k,N\cdot n^{m^{k-1}}+M\right]:=B[k-1,N]$

$\;\;\;\;\;\;\;\;\;\;\;\;\;\;\;\;\;\;\;\;\;\;\;\;\;\;\;\;\;\;\;\;$ For $P$ from $0$ to $m^{k-1}-1$

$\;\;\;\;\;\;\;\;\;\;\;\;\;\;\;\;\;\;\;\;\;\;\;\;\;\;\;\;\;\;\;\;\;\;\;\;\;$ $C\left[k,N\cdot n^{m^{k-1}}+M,P\right]:=$

$\;\;\;\;\;\;\;\;\;\;\;\;\;\;\;\;\;\;\;\;\;\;\;\;\;\;\;\;\;\;\;\;\;\;\;\;\;\;\;\;\;\;\;\;\;\;\;\;\;\;\;\;\;$
$:=A[k-1,N,P]\cdot B\left[1,\left(\on{floor}\left(\frac{M}{n^{m^{k-1}-1-P}}\right)\right)\on{mod}n\right]$

$\;\;\;\;\;\;\;\;\;\;\;\;\;\;\;\;\;\;\;\;\;\;\;\;\;\;\;\;\;\;\;\;\;\;\;\;\;$ $B\left[k,N\cdot n^{m^{k-1}}+M\right]:=B\left[k,N\cdot n^{m^{k-1}}+M\right]+C\left[k,N\cdot n^{m^{k-1}}+M,P\right]$

$\;\;\;\;\;\;\;\;\;\;\;\;\;\;\;\;\;\;\;\;\;\;\;\;\;\;\;\;\;\;\;\;\;\;\;\;\;$ For $I$ form $0$ to $m-1$

$\;\;\;\;\;\;\;\;\;\;\;\;\;\;\;\;\;\;\;\;\;\;\;\;\;\;\;\;\;\;\;\;\;\;\;\;\;\;\;\;\;\;\;\;$ 
$A\left[k,N\cdot n^{m^{k-1}}+M,P\cdot m+I\right]:=$

$\;\;\;\;\;\;\;\;\;\;\;\;\;\;\;\;\;\;\;\;\;\;\;\;\;\;\;\;\;\;\;\;\;\;\;\;\;\;\;\;\;\;\;\;\;\;\;\;\;\;\;\;\;$
 $:=A[k-1,N,P]\cdot A\left[1,\left(\on{floor}\left(\frac{M}{n^{m^{k-1}-1-P}}\right)\right)\on{mod}n,I\right]$

%

$\;\;\;\;\;\;\;\;\;\;\;\;\;\;\;\;\;\;\;\;\;\;\;\;\;\;\;\;\;\;\;\;\;\;\;\;\;\;\;\;\;\;\;\;$

\begin{remark}\emph{
Note that for computing $B^\alpha$ and $A's$ on the step $k$, we need $n+n^{\frac{m^{k-1}-1}{m-1}}\left(1+m^{k-1}\right)$ places for matrices from the first and the $k-1$ steps, which are necessary for further computations, and $n^{\frac{m^k-1}{m-1}}\left(1+(m+1)m^{k-1}\right)$ places for a result.
}
\end{remark}

The algorithm given above is quite expansive, even if we make it a bit more efficient (for example, we can avoid making some computations many times by defining certain lists at the beginning).

Therefore we also present a shortcut. The crucial observation is that if in Lemma \ref{filip4} we take $D:=\{0\}$, then for every $\alpha\in\;_k\Omega$, $f_\alpha(D_k)=\{B^\alpha\}$ (because in this case, $x_{(\epsilon_1,...,\epsilon_k)}=0$). Hence sets $\{B^\alpha:\alpha\in\;_k\Omega\}$ are better and better approximations of the attractor $A_\F$. Also, by a proof of Theorem \ref{it2} (see (\ref{fifi1})), we have
$$
B^\alpha=A^{\alpha^1}_1 \cdot {B^{{\alpha}{(1)}}} + A^{\alpha^1}_2 \cdot {B^{{\alpha}{(2)}}}+...+ A^{\alpha^1}_m \cdot {B^{{\alpha}{(m)}}}+B^{\alpha^1}
$$
for all $\alpha\in\;_k\Omega$, $k\geq 2$. This gives a suggestion how appropriate algorithm should work (of course, from the previous one we also obtain elements $B^\alpha$).\\
Let $\alpha=(\alpha^1,(\alpha^2_1,...,\alpha^2_m),...(\alpha^k_1,...,\alpha^k_m))\in\;_k\Omega$, $k\geq 2$. By similar reasonings as earlier, we can see that for any $i=2,...,k$ and $j=1,...,m$, we have
\begin{equation}\label{fifi2}
M(\alpha^i_j,i-1)=\left(\on{floor}\left(\frac{N(\alpha,k)}{n^{\frac{1-m^k}{1-m}-\frac{1-m^{i-1}}{1-m}-j\cdot m^{i-2}}}\right)\right)
\on{mod}n^{m^{i-2}}
\end{equation}
and
\begin{equation}\label{fifi3}
N(\alpha(j),k-1)=M\left(\alpha^k_j,k-1\right)+M\left(\alpha^{k-1}_j,k-2\right)\cdot n^{m^{k-2}}+
M\left(\alpha^{k-2}_j,k-3\right)\cdot n^{m^{(k-2)}+m^{(k-3)}}+...+\end{equation} $$+...+M\left(\alpha^2_j,1\right)\cdot n^{m^{(k-2)}+m^{(k-3)}+...+m^{1}}
$$
provided $k>2$, and
\begin{equation}\label{fifi4}
N(\alpha(j),1)=M\left(\alpha^2_j,1\right)
\end{equation}
provided $k=2$. Observe that (\ref{fifi3}) and (\ref{fifi4}) can be written in this form:
$$
N(\alpha(j),k-1)=\sum_{i=2}^{k}M(\alpha^i_j,i-1)n^{\frac{m^{i-1}-m^{k-1}}{1-m}}
$$

We are ready to give the pseudocode for the algorithm. $N$ will mean $N(\alpha,k)$, $B[k,N]$ will mean $B[k,N(\alpha,k)]$, and $P[j]$ will mean $N[\alpha(j),k-1]$.

\begin{center}\textbf{Pseudocode for affine GIFSs - defining variables $B^\alpha$}\end{center}

Initially defined objects: constants $m,n,A^i_j,B^i$, $i=1,...,n$, $j=1,...,m$, variables: $N,i,j,k,M$, list: $P$, matrices: $A,B$

Initially defined values:

$\;\;\;\;\;\;\;\;\;\;\;\;\;\;$

$\;\;\;\;\;\;\;\;\;\;\;\;\;\;$ $k:=1$

$\;\;\;\;\;\;\;\;\;\;\;\;\;\;$ For $N$ from $0$ to $n-1$

$\;\;\;\;\;\;\;\;\;\;\;\;\;\;\;\;\;\;\;$ $B[k,N]:=B^{N+1}$

$\;\;\;\;\;\;\;\;\;\;\;\;\;\;\;\;\;\;\;$ For $j$ from $1$ to $m$

$\;\;\;\;\;\;\;\;\;\;\;\;\;\;\;\;\;\;\;\;\;\;\;\;\;$ $A[N,j]:=A^{N+1}_j$




Main loop:

$\;\;\;\;\;\;\;\;\;\;\;\;\;\;$ $k:=k+1$

$\;\;\;\;\;\;\;\;\;\;\;\;\;\;$ For $N$ form $0$ to $n^{\frac{m^k-1}{m-1}}-1$

$\;\;\;\;\;\;\;\;\;\;\;\;\;\;\;\;\;\;\;\;$ For $j$ from $1$ to $m$

$\;\;\;\;\;\;\;\;\;\;\;\;\;\;\;\;\;\;\;\;\;\;\;\;\;\;$ $P[j]:=\sum_{i=2}^k\left(\left(\on{floor}\left(\frac{N}{n^{\frac{1-m^k}{1-m}-\frac{1-m^{i-1}}{1-m}-j\cdot m^{i-2}}}\right)\right)
\on{mod}n^{m^{i-2}}\right)\cdot n^{\frac{m^{i-1}-m^{k-1}}{1-m}}$




$\;\;\;\;\;\;\;\;\;\;\;\;\;\;\;\;\;\;\;\;$ $B[k,N]:=\sum_{j=1}^mA\left[\on{floor}\left(\frac{N}{n^{\frac{m^{k}-1}{m-1}-1}}\right),j\right]\cdot B\left[k-1,P[j]\right]+B\left[1,\on{floor}\left(\frac{N}{n^{\frac{m^{k}-1}{m-1}-1}}\right)\right]$\\

Finally, we present the example.
\begin{example}\label{e3}\emph{
Consider the GIFSs $\F$ and $\G$ from Example \ref{e1}. Clearly, they are affine. Using the above algorithm, we get the following images:}
\begin{center}
\includegraphics{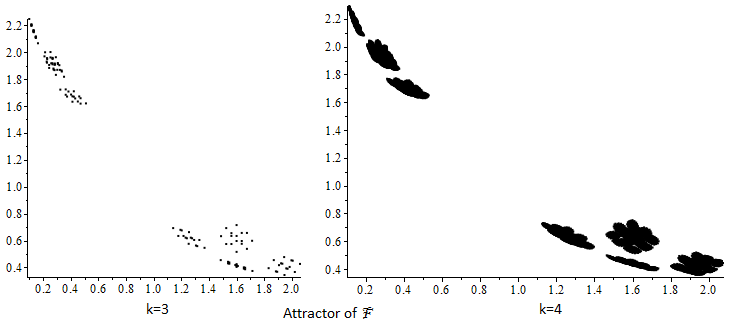}
\end{center}
\begin{center}
\includegraphics{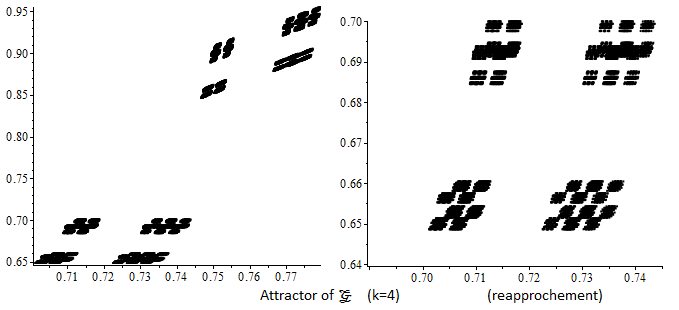}
\end{center}
\end{example}

\begin{remark}\emph{
Note that in the "shorter" version of the algorithm, for computing $B^\alpha$ on the step $k$, we need $n^{\frac{m^{k-1}-1}{m-1}}+n(m+1)$ places for matrices from the first and the $k-1$ steps, which are necessary for further computations, and $m+n^{\frac{m^k-1}{m-1}}$ places for a result.
}
\end{remark}

\end{document}